\colorlet{darkishRed}{red!80!black}
\colorlet{darkishBlue}{blue!60!black}
\colorlet{darkishGreen}{green!60!black}
\newcommand{\sun}{sun}
\def\lowfwd #1#2#3{{\mathop{\kern0pt #1}\limits^{\kern#2pt\raise.#3ex
\vbox to 0pt{\hbox{$\scriptscriptstyle\rightarrow$}\vss}}}}
\def\lowbkwd #1#2#3{{\mathop{\kern0pt #1}\limits^{\kern#2pt\raise.#3ex
\vbox to 0pt{\hbox{$\scriptscriptstyle\leftarrow$}\vss}}}}
\def\ve{\kern-1.5pt\lowfwd e{1.5}2\kern-1pt}
\def\ev{\kern-1pt\lowbkwd e{0.5}2\kern-1pt}
\def\vf{\kern-2pt\lowfwd f{2.5}2\kern-1pt}
\renewcommand{\subset}{\subseteq}
\renewcommand{\supset}{\supseteq}
\newcommand{\at}{attached to~}
\newcommand{\Abs}[1]{\partial_{\Omega} {#1}}
\newcommand{ \N } { \mathbb{N} }
\def\calCommandfactory#1{%
   \expandafter\def\csname c#1\endcsname{\mathcal{#1}}}
\def\frakCommandfactory#1{%
   \expandafter\def\csname frak#1\endcsname{\mathfrak{#1}}}
\newcounter{ctr}
  \edef\X{\@Alph\c@ctr}
  \edef\Y{\@alph\c@ctr}
\renewcommand{\cC}{\mathscr{C}}
\newcommand{\rsep}[2]{({#1},{#2})}
\newcommand{\lsep}[2]{({#1},{#2})}
\newcommand{\sep}[2]{\{{#1},{#2}\}}
\newcommand{\dc}[1]{\lceil #1\rceil}
\newcommand{\uc}[1]{\lfloor #1\rfloor}
\newcommand{\guc}[1]{\lfloor\mkern-1.4\thinmuskip\lfloor #1\rfloor\mkern-1.4\thinmuskip\rfloor}
\newtheorem{theorem}{Theorem}[section]
\newtheorem{lemma}[theorem]{Lemma}
\newtheorem{example}[theorem]{Example}
\newenvironment{customthm}[1]
  {\innercustomthm}
  {\endinnercustomthm}
\theoremstyle{definition}
\newtheorem{definition}[theorem]{Definition}
\theoremstyle{remark}
\title[Countably determined ends and graphs]{\Large Countably determined ends and graphs}
\author{Jan Kurkofka}
\author{Ruben Melcher}
\address{University of Hamburg, Department of Mathematics, Bundesstraße 55 (Geomatikum), 20146 Hamburg, Germany}
\email{\{jan.kurkofka, ruben.melcher\} @uni-hamburg.de}
\keywords{infinite graph; countably determined; end; direction; end space; axioms of countability; first countable; second countable; normal tree; tree-decomposition}
\subjclass[2020]{05C63, 05C75, 54D40, 54D70, 54D65}
\begin{document}

\begin{abstract}
The \emph{directions} of an infinite graph $G$ are a tangle-like description of its ends: they are choice functions that choose compatibly for all finite vertex sets $X\subset V(G)$ a component of $G-X$.

Although every direction is induced by a ray, there exist directions of graphs that are not uniquely determined by any countable subset of their choices.
We characterise these directions and their countably determined counterparts in terms of star-like substructures or rays of the graph.

Curiously, there exist graphs whose directions are all countably determined but which cannot be distinguished all at once by countably many choices.

We structurally characterise the graphs whose directions can be distinguished all at once by countably many choices, and we structurally characterise  the graphs which admit no such countably many choices. Our characterisations are phrased in terms of normal trees  and tree-decompositions.

Our four (sub)structural characterisations imply combinatorial characterisations of the four classes of infinite graphs that are defined by the first and second axiom of countability applied to their end spaces: the two classes of graphs whose end spaces are first countable or second countable, respectively, and the complements of these two classes.

\end{abstract}
\vspace*{-1cm}
\maketitle

\vspace*{-.7cm}
\section{Introduction}

\noindent Halin~\cite{Halin_Enden64} defined the ends of an infinite graph `from below' as  equivalence classes of rays in the graph, where two rays are equivalent if no finite set of vertices separates them.
As a complementary description of Halin's ends, Diestel and Kühn~\cite{diestelKuhn2003directions} introduced the notion of directions of infinite graphs.
These are defined `from above': 
A \emph{direction} of a graph $G$ is a map $f$, with domain the collection $\cX=\cX(G)$ of all finite vertex sets of $G$, that assigns to every $X\in\cX$ a component $f(X)$ of $G-X$ such that $f(X)\supset f(X')$ whenever~$X\subset X'$.

Every end $\omega$ of $G$ defines a direction $f_\omega$ of $G$ by letting $f_\omega(X)$ be the component of $G-X$ that contains a subray of every ray in $\omega$.
Diestel and Kühn showed that the natural map $\omega\mapsto f_\omega$ is in fact a bijection between the ends of $G$ and its directions.
This correspondence is now well known and has become a standard tool in the study of infinite graphs.
See~\cite{StarComb1StarsAndCombs,EndsAndTangles,VTopComp,ApproximatingNormalTrees,EndsTanglesCrit,StoneCechTangles} for examples.

The domain of the directions of $G$ might be arbitrarily large as its size is equal to the order of $G$.
This contrasts with the fact that every direction of $G$ is induced by a ray of $G$ and rays have countable order. 
Hence the question arises whether every direction  of $G$ is `countably determined' in $G$ also by a countable subset of its choices. 
A \emph{directional choice} in $G$ is a pair $(X,C)$ of a finite vertex set $X\in\cX$ and a component $C$ of $G-X$. We say that a directional choice $(X,C)$ in $G$ \emph{distinguishes} a direction $f$ from another direction~$h$ if $f(X)=C$ and $h(X)\neq C$. A direction $f$ of $G$ is \emph{countably determined} in $G$ if there is a countable set of directional choices in $G$ that distinguish $f$ from every other direction of $G$.

Curiously, the answer to this question is in the negative:
Consider the graph $G$ that arises from the uncountable complete graph $K^{\aleph_1}$ by adding a new ray $R_v$ for every vertex $v\in K^{\aleph_1}$ so that $R_v$ meets $K^{\aleph_1}$ precisely in its first vertex $v$ and $R_v$ is disjoint from all the other new rays~$R_{v'}$.
Then $K^{\aleph_1}\subset G$ induces a direction of $G$ that is not countably determined in $G$.

This example raises the question of which directions of a given graph $G$ are countably determined.
In the first half of our paper we answer this question: we characterise for every graph $G$, by unavoidable substructures, both the countably determined directions of $G$ and its directions that are not countably determined.

If $R\subset G$ is any ray, then every finite initial segment $X$ of $R$ naturally defines a directional choice in~$G$, namely $(X,C)$ for the component $C$ that contains $R-X$. 
Let us call  $R$ \emph{directional} in $G$ if its induced direction is distinguished from every other direction of $G$ by the directional choices that are defined by~$R$.
By definition, every direction of $G$ that is induced by a directional ray is  countably determined in~$G$.
Surprisingly, our characterisation implies that the converse holds as well: if a direction of $G$ is distinguished from every other direction by countably many directional choices $(X,C)$, then no matter how the vertex sets $X$ lie in $G$ we can always assume that the sets $X$ are the finite initial segments of a directional ray.

\begin{customthm}{1}\label{introTheoremOne}
For every graph $G$ and every direction $f$ of $G$ the following assertions are equivalent:
\begin{enumerate}
    \item The direction $f$ is countably determined in $G$.
    \item The direction $f$ is induced by a directional ray of $G$.
    \end{enumerate}
\end{customthm}

As our second main result we characterise by unavoidable substructures the directions of any given graph that are not countably determined in that graph, and thereby complement our first characterisation.
Our theorem is phrased in terms of substructures that are uncountable star-like combinations either of rays or of double rays.
Recall that a vertex $v$ of a graph $G$ \emph{dominates} a ray $R\subset G$ if there is an infinite $v$--$R$ fan in~$G$. 
An end of $G$ is \emph{dominated} if one (equivalently:~each) of its rays is dominated, see~\cite{diestel2015book}.
Given a direction $f$ of $G$ we write $\omega_f$ for the unique end $\omega$ of $G$ whose rays induce~$f$, i.e., which satisfies $f_\omega = f$.
If $G$ is a graph and $(T,\cV)$ is a tree-decomposition of $G$ that has finite adhesion, then every direction of~$G$ either corresponds to a direction of $T$ or lives in a part of $(T,\cV)$; see Section~\ref{subsec:tdcsStreesEnds}. An \emph{uncountable star-decomposition}  is a tree-decomposition whose decomposition tree is a star $K_{1,\kappa}$ for some uncountable cardinal~$\kappa$.

\begin{customthm}{2}\label{introTheoremTwo}
For every graph $G$ and every direction $f$ of $G$ the following assertions are equivalent:
\begin{enumerate}
    \item The direction $f$ is not countably determined in $G$.
    \item The graph $G$ contains either
    \begin{itemize}[label=\textbf{--}]
        \item uncountably many disjoint pairwise inequivalent rays all of which start at vertices that\\dominate~$\omega_f$, or
        \item uncountably many disjoint double rays, all having one tail in~$\omega_f$ and another not in $\omega_f$,\\so that the latter tails are inequivalent for distinct double rays.
    \end{itemize}
\end{enumerate}
Moreover, if \emph{(ii)} holds, we can find the (double) rays together with an uncountable star-decomposition of $G$ of finite adhesion such that $f$ lives in the central part and each (double) ray has a tail in its own leaf part.
\end{customthm}

\noindent Note that (ii) clearly implies (i).

Does the local property that every direction of $G$ is countably determined in $G$ imply the stronger global property that there is one countable set of directional choices that distinguish every two directions of $G$ from each other?
We answer this question in the negative; see Example~\ref{countablyDeterminedLocalNotGlobal}. Let us call a graph $G$ \emph{countably determined}  if there is a countable set of directional choices in $G$ that distinguish every two directions of $G$ from each other.

In the second half of our paper we structurally characterise both the graphs that are countably determined and the graphs that are not countably determined.
A~rooted tree $T\subset G$ is \emph{normal} in $G$ if the endvertices of every $T$-path  in $G$ are comparable in the tree-order of~$T$, cf.~\cite{diestel2015book}. (A \emph{$T$-path} in $G$ is a non-trivial path that meets $T$ exactly in its endvertices.)

\newpage

\begin{customthm}{3}\label{introTheoremThree}
For every connected graph $G$ the following assertions are equivalent:
\begin{enumerate}
    \item $G$ is countably determined.
    \item  $G$ contains  a countable normal tree  that contains a ray from every end of~$G$.
    \end{enumerate}
\end{customthm}

Complementing this characterisation we structurally characterise, as our fourth main result, the graphs that are not countably determined.

\begin{customthm}{4}\label{introTheoremFour}
For every connected graph $G$ the following assertions are equivalent:
\begin{enumerate}
    \item   $G$ is not countably determined.
    \item   $G$ has an uncountable star-decomposition of finite adhesion such that in every leaf part  there lives a direction of $G$.
    \end{enumerate}
\end{customthm}

Interestingly, countably determined directions and countably determined graphs admit natural topological interpretations.
Over the course of the last two decades, the topological properties of end spaces have been extensively investigated, see e.g.~\cite{diestel2006end,diestelKuhn2003directions,polat1996ends,polat1996ends2,sprussel2008end}.
However, not much is known about such fundamental properties as countability axioms. 
Recall that a topological space is \emph{first countable} at a given point if it has a countable neighbourhood base at that point. 
A direction of a graph $G$ is countably determined in $G$ if and only if it is defined by an end that has a countable neighbourhood base in the end space of~$G$ (Theorem~\ref{thm: equiv. first countable}). 
Thus, Theorems~\ref{introTheoremOne} and~\ref{introTheoremTwo} characterise combinatorially when the end space of a graph is first countable or not first countable at a given end, respectively.
Similarly, a graph is countably determined if and only if its end space is \emph{second countable} in that its entire topology has a countable base (Theorem~\ref{lemma: G count. det. iff Omega second count.}).  
Therefore, Theorems~\ref{introTheoremThree} and~\ref{introTheoremFour} characterise combinatorially the  infinite graphs whose end spaces are second countable or not second countable, respectively.
Furthermore, our four theorems imply similar results for the space $\vert G\vert$ formed by a graph $G$ together with its end space; see Section~\ref{section: modG}.

This paper is organised as follows:
In the next section we give a reminder on end spaces and recall all the results from graph theory and general topology that we need. 
We prove Theorems~\ref{introTheoremOne} and~\ref{introTheoremTwo} in Section~\ref{section: first coutable} and we prove Theorems~\ref{introTheoremThree} and~\ref{introTheoremFour} in Section~\ref{section: second countable}. Finally, in Section~\ref{section: modG} we consider the spaces $\vert G \vert$.

\section{preliminaries}\label{setion: preliminaries}

\noindent For graph theoretic terms we follow the terminology in \cite{diestel2015book}.
For topological notions we follow the terminology in \cite{EngelkingBook}. 

\subsection{Ends of graphs}
A $1$-way infinite path is called a \emph{ray} and the subrays of a ray are its \emph{tails}. Two rays in a graph $G = (V,E)$ are \emph{equivalent} if no finite set of vertices separates them; the corresponding equivalence classes of rays are the \emph{ends} of $G$. The set of ends of a graph $G$ is denoted by $\Omega = \Omega(G)$. 
If~$X \subseteq V$ is finite and $\omega \in \Omega$, 
there is a unique component of $G-X$ that contains a tail of every ray in~$\omega$, which we denote by $C(X,\omega)$. 
If $C$ is any component of $G-X$, we write $\Omega(X,C)$ for the set of ends $\omega$ of~$G$ with $C(X,\omega) = C$, and abbreviate $\Omega(X,\omega) := \Omega(X,C(X,\omega))$. The ends in $\Omega(X,\omega)$ are said to \emph{live} in $C(X,\omega)$. For two ends $\omega_1$ and $\omega_2$ a finite vertex set $X \subseteq V$ \emph{separates} $\omega_1$ and $\omega_2$ if they live in distinct components of $G-X$.

The collection of sets $\Omega(X,C)$ with $X\subset V$ finite and $C$ a component of $G-X$ form a basis for a topology on $\Omega$.
This topology is Hausdorff, and it
has a basis consisting of closed-and-open sets. 
The space $\Omega(G)$ with this topology is called the \emph{end space} of $G$. 

A vertex $v$ of $G$ \emph{dominates} a ray $R\subset G$ if there is an infinite $v$--$R$ fan in~$G$. 
An end of $G$ is \emph{dominated} by $v$ if one (equivalently:~each) of its rays is dominated by~$v$.
If a vertex $v$ of $G$ dominates an end $\omega$ of $G$, then $v \in  C(X, \omega)$ for all finite sets $X \subseteq V(G)$ with $v \not\in X$.

Recall that a \emph{comb} is the union of a ray $R$ (the comb's \emph{spine}) with infinitely many disjoint finite paths, possibly trivial, that have precisely their first vertex on~$R$. 
The last vertices of those paths are the \emph{teeth} of this comb.
Given a vertex set $U$, a \emph{comb attached to} $U$ is a comb with all its teeth in $U$, and a \emph{star attached to} $U$ is a subdivided infinite star with all its leaves in $U$.
Then the set of teeth is the \emph{attachment set} of the comb, and the set of leaves is the \emph{attachment set} of the star.
The following lemma is~\cite[Lemma~8.2.2]{diestel2015book}:

\begin{lemma}[Star-Comb Lemma]\label{lemma: star-comb}
Let $U$ be an infinite set of vertices in a connected graph $G$.
Then $G$ contains either a comb 
\at $U$ or a star \at $U$.
\end{lemma}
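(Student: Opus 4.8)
The plan is to reduce the statement to a rooted tree and then split into a high-branching case, which yields a star, and a König's Lemma case, which yields a comb. First I would reduce to a tree: since $G$ is connected it has a spanning tree $T$, and any comb or star attached to $U$ found inside $T$ is also one inside $G$, since teeth (respectively leaves) stay in $U$. So it suffices to produce the desired subgraph in $T$. I would fix a root $r\in T$, work with the induced tree-order, and write $T_t$ for the \emph{cone} above $t$, i.e. the set of all vertices $s$ with $t\le s$. Call a vertex $t$ \emph{relevant} if $T_t$ meets $U$. The relevant vertices are closed downwards, so they induce a subtree $T'\subseteq T$ containing $r$, and $T'$ is infinite because each of the infinitely many vertices of $U$ is relevant.

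The main dichotomy is then applied to $T'$. Suppose first that some relevant vertex $t$ has infinitely many children in $T'$. Each such child $c$ satisfies $T_c\cap U\neq\emptyset$, so I can choose a vertex $u_c\in T_c\cap U$ together with the tree-path from $t$ to $u_c$. For distinct children these paths run into disjoint cones $T_c$, hence meet only in $t$, and together they form a subdivided infinite star centred at $t$ with all leaves in $U$: a star attached to $U$.

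Otherwise every vertex of $T'$ has only finitely many children in $T'$, so $T'$ is an infinite, finitely branching, rooted tree; by König's Lemma it contains a ray $R=s_0s_1s_2\cdots$ with $s_0=r$. I would then extract a comb with spine $R$. If infinitely many $s_i$ have a child in $T'$ other than $s_{i+1}$, each such off-ray child leads, within its cone and hence disjointly from the tail of $R$ beyond $s_i$ and from the other chosen paths, to a $U$-vertex; this gives infinitely many pairwise disjoint finite tooth-paths and hence a comb attached to $U$. If instead only finitely many $s_i$ branch in $T'$, then beyond some index $N$ the cone of each $s_i$ meets $T'$ in exactly the tail of $R$; since each $s_i$ is relevant and every $U$-vertex is relevant, this forces $T_{s_i}\cap U\subseteq\{s_i,s_{i+1},\dots\}$ to be nonempty for all $i\ge N$, so infinitely many vertices of $R$ lie in $U$, and $R$ together with these vertices as trivial teeth is again a comb attached to $U$.

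The hard part will be the comb case: I must guarantee infinitely many genuinely disjoint finite paths from the spine into $U$, and in particular handle the degenerate possibility that the ray supplied by König's Lemma already carries the $U$-vertices itself. Confining each tooth-path to a distinct cone $T_c$ is what makes disjointness automatic, and the ``possibly trivial'' clause in the definition of a comb is exactly what resolves the degenerate subcase. Everything else — the reduction to a spanning tree, the downward closedness of $T'$, and the star construction — I expect to be routine.
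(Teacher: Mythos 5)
Your proof is correct. Note first that the paper itself gives no proof of this statement: it is quoted as a known result (Lemma~8.2.2 of Diestel's book), so the comparison here is with the classical textbook argument rather than with anything in the paper. Your argument is essentially that classical one: reduce to a tree, then run the dichotomy ``some vertex has infinitely many relevant children'' (star) versus ``the relevant subtree is locally finite, so K\H{o}nig's Lemma gives a ray'' (comb). The one variation is at the start: the textbook proof greedily builds a \emph{countable} auxiliary tree by repeatedly attaching $U$--tree paths, whereas you take a full spanning tree of $G$ and prune it to the down-closed subtree $T'$ of relevant vertices; both devices serve the same purpose, namely to make the cones of incomparable vertices disjoint so that the tooth-paths and star-paths are automatically disjoint. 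Your handling of the delicate points is sound: the cones $T_c$ of distinct off-ray children are disjoint from each other and from the ray, choosing one child per branching vertex $s_i$ keeps the tooth-paths pairwise disjoint (two paths sharing the same $s_i$ would violate the comb definition, so this selection matters and you do make it), and the degenerate case where the $U$-vertices sit on the ray itself is exactly what the ``possibly trivial'' clause in the paper's definition of a comb absorbs, as you observe.
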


Let us say that an end $\omega$ of $G$ is contained \emph{in the closure} of $M$, where $M$ is either a subgraph of $G$ or a set of vertices of $G$, if for every $X\in\cX$ the component $C(X,\omega)$ meets $M$.
Equivalently, $\omega$ lies in the closure of $M$ if and only if $G$ contains a comb \at $M$ with its spine in $\omega$.
We write $\Abs{M}$ 
for the subset of $\Omega$ that consists of the ends of $G$ lying in the closure of $M$. 
A vertex set $U\subset V(G)$ is \emph{dispersed} in~$G$ if $\Abs{U}=\emptyset$.

\subsection{Normal trees}

The \emph{tree-order} of a rooted tree $T=(T,r)$ is defined by setting $u \leq v$ if $u$ lies on the unique path $rTv$ from $r$ to $v$ in $T$. Given $n \in \N$, the \emph{$n$th level} of $T$ is the set of vertices at distance $n$ from $r$ in $T$. The \emph{down-closure} of a vertex $v$ is the set $\lceil v \rceil := \{\,u \colon u \leq v\,\}$; its \emph{up-closure} is the set $\lfloor v \rfloor := \{\,w \colon v \leq w\,\}$. The down-closure of $v$ is always a finite chain, the vertex set of the path $rTv$. A ray $R \subset T$ starting at the root is called a \emph{normal ray} of $T$.

A rooted tree $T$ contained in a graph $G$ is \emph{normal} in~$G$ if the endvertices of every \mbox{$T$-path} in $G$ are comparable in the tree-order of~$T$.
Here, for a given graph~$H$, a path $P$ is said to be an $H$-\emph{path} if $P$ is non-trivial and meets $H$ exactly in its endvertices.
We remark that for a normal tree $T\subset G$ the neighbourhood $N(C)$ of every component $C$ of $G-T$ forms a chain in $T$. 

The \emph{generalised up-closure} $\guc{x}$ of a vertex $x\in T$ is the union of $\uc{x}$ with the vertex set of $\bigcup \cC(x)$, where the set $\cC(x)$ consists of those components of $G-T$ whose neighbourhoods meet $\uc{x}$.
Every graph $G$ reflects the separation properties of each normal tree $T\subset G$: 

\begin{lemma}[{\cite[Lemma~2.10]{StarComb1StarsAndCombs}}]\label{lemma: NTseparationAndComponents}
Let $G$ be any graph and let $T\subset G$ be any normal tree.
\begin{enumerate}
    \item Any two vertices $x,y\in T$ are separated in $G$ by the vertex set $\dc{x}\cap\dc{y}$.
    \item Let $W\subset V(T)$ be down-closed. Then the components of $G-W$ come in two types: the components that avoid $T$; and the components that meet $T$, which are spanned by the 
    sets $\guc{x}$ with $x$ minimal in $T-W$. 
\end{enumerate}
\end{lemma}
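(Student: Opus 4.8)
The plan is to treat the two parts separately, each time using normality only in the following form: a segment of a path that meets $T$ exactly in its two endvertices is a $T$-path, so its endvertices are comparable; in particular any edge of $G$ between two vertices of $T$ joins comparable vertices.

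For (i), the comparable case is immediate: if $x\le y$ then $\dc{x}\cap\dc{y}=\dc{x}\ni x$, so every $x$--$y$ path already starts in the separator. So I assume $x,y$ incomparable and let $z$ be the greatest vertex with $z\le x$ and $z\le y$, so that $\dc{x}\cap\dc{y}=\dc{z}$. Given any $x$--$y$ path $P$, I list the vertices of $P$ lying in $T$ in their order along $P$ as $x=u_0,u_1,\dots,u_k=y$; each segment of $P$ between consecutive $u_i,u_{i+1}$ meets $T$ only in its endvertices, hence is a $T$-path, so $u_i$ and $u_{i+1}$ are comparable. Writing $x',y'$ for the children of $z$ below $x$ resp.\ $y$ and setting $T_x:=\uc{x'}$, $T_y:=\uc{y'}$, I observe that $T_x,T_y$ are disjoint up-closed sets with $x\in T_x$ and $y\in T_y$, and that every vertex comparable to a vertex of $T_x$ lies in $T_x\cup\dc{z}$ (symmetrically for $T_y$). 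Following the chain of comparabilities $u_0,\dots,u_k$ from $T_x$ to $T_y$, the first $u_i$ that leaves $T_x$ must therefore lie in $\dc{z}=\dc{x}\cap\dc{y}$, so $P$ meets the separator.

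For (ii), since $W$ is down-closed the set $T-W$ is up-closed and the subtrees $\uc{x}$ over the minimal vertices $x$ of $T-W$ partition $V(T)\setminus W$. I aim to show that each $\guc{x}$ is precisely the vertex set of one component of $G-W$. Connectivity of $G[\guc{x}]$ is clear, as $\uc{x}$ spans a subtree and each $C\in\cC(x)$ is connected and attaches to $\uc{x}$; and $\guc{x}$ avoids $W$ because no vertex above $x$ lies in the down-closed set $W$ and the components of $G-T$ are disjoint from $T$. The decisive step is to show that $G-W$ contains no edge leaving $\guc{x}$. For this I use that every $T$-neighbour $w'$ of a vertex $w\in\uc{x}$ is comparable to $w$, hence lies in $\uc{x}$ if $w'\ge w$, and otherwise lies on the root--$w$ path through $x$, where minimality of $x$ and down-closedness of $W$ force $w'\in\uc{x}\cup W$; and that for each $C\in\cC(x)$ the attachment set $N(C)$ is a chain meeting $\uc{x}$, so the same comparability analysis yields $N(C)\subseteq\uc{x}\cup W$. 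Hence every $G$-neighbour of $\guc{x}$ outside $\guc{x}$ lies in $W$, so $\guc{x}$ is a union of components of $G-W$, and being connected it is exactly one.

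It remains to close the classification: any component $D$ of $G-W$ meeting $T$ contains a vertex of $T-W$, hence a vertex of some $\uc{x}\subseteq\guc{x}$ with $x$ minimal, which by the previous paragraph forces $D=G[\guc{x}]$; the components avoiding $T$ are then exactly the remaining ones. I expect the neighbour-control in (ii) to be the main obstacle: showing that both the tree-neighbours of $\uc{x}$ and the attachment sets $N(C)$ stay inside $\uc{x}\cup W$ is the one point where normality (through comparability of adjacent tree vertices and the chain property of the $N(C)$), minimality of $x$, and down-closedness of $W$ must all be combined at once.
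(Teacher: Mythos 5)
Your proof is correct. Note that this paper gives no proof of the lemma at all---it is imported as a citation from \cite{StarComb1StarsAndCombs}---so the only meaningful comparison is with the standard argument for that cited result, and yours is essentially it: part (i) via comparability of consecutive $T$-vertices along any $x$--$y$ path together with a first-exit argument from the up-closure $\uc{x'}$ of the child of $z$, and part (ii) via showing each $\guc{x}$ is connected, avoids $W$, and has all its neighbours outside itself in $W$. The one step you leave implicit---that a neighbour of a vertex $w\in\uc{x}$ lying outside $T$ belongs to a component $C$ of $G-T$ with $w\in N(C)$, hence $C\in\cC(x)$ and the neighbour already lies in $\guc{x}$---is immediate from the definition of the generalised up-closure, so it is not a gap.
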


As a consequence, the normal rays of a normal spanning tree $T\subset G$ reflect the end structure of $G$ in that every end of $G$ contains exactly one normal ray of $T$, \cite[Lemma~8.2.3]{diestel2015book}.
More generally:

\begin{lemma}[{\cite[Lemma~2.11]{StarComb1StarsAndCombs}}]\label{lemma: NormalTreeNormalRay}
If $G$ is any graph and $T\subset G$ is any normal tree,
then every end of $G$ in the closure of $T$ contains exactly one normal ray of $T$.
Moreover, sending these ends to the normal rays they contain defines a bijection between $\Abs{T}$ and the normal rays of~$T$.
\end{lemma}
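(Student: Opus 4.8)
The plan is to handle the two assertions in turn: first to construct, for every end $\omega\in\Abs T$, a normal ray of $T$ lying in $\omega$; then to show this ray is the only one in $\omega$; and finally to read off the bijection. The construction will steer the choices using the end $\omega$ itself, and both the existence and the uniqueness will be driven by the two parts of Lemma~\ref{lemma: NTseparationAndComponents}.

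For existence I would build the ray level by level. Since $\omega\in\Abs T$, the component $C(X,\omega)$ meets $T$ for every finite $X$, in particular for the finite down-closed chains $\dc{r_n}$ that I will produce. Starting from the root $r=:r_0$, suppose $r_0<\dots<r_n$ have been chosen with $C(\dc{r_{n-1}},\omega)=G[\guc{r_n}]$. Applying Lemma~\ref{lemma: NTseparationAndComponents}(ii) to the down-closed set $\dc{r_n}$, the component $C(\dc{r_n},\omega)$, which meets $T$, is spanned by $\guc{x}$ for some $x$ minimal in $T-\dc{r_n}$. Since $C(\dc{r_n},\omega)\subseteq C(\dc{r_{n-1}},\omega)=G[\guc{r_n}]$ and $\guc{r_n}\cap V(T)=\uc{r_n}$, this $x$ cannot branch off below $r_n$ and must be a child of $r_n$; I set $r_{n+1}:=x$. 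This yields an increasing chain, i.e.\ a normal ray $R=r_0r_1\cdots$ whose tails lie in every $C(\dc{r_n},\omega)$.

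The hard part will be showing $R\in\omega$. Here I would invoke the comb characterisation of $\Abs T$ to fix a comb attached to $T$ with spine $R_0\in\omega$ and all teeth in $T$. Because a tail of $R_0$ lies in each $G[\guc{r_{n+1}}]$ and $\guc{r_{n+1}}\cap V(T)=\uc{r_{n+1}}$, all but finitely many teeth lie in $\uc{r_{n+1}}$, so the teeth sit arbitrarily high above $R$. To conclude $R\sim R_0$ (hence $R\in\omega$) I would assume a finite set $Y$ separates the two rays and derive a contradiction by routing a path through a sufficiently high tooth $t$: the tree-path from $t$ down to the point of $R$ below it stays inside $T$ at high levels, and together with suitable tails of $R_0$ and $R$ it avoids $Y$. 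The subtlety this routing circumvents is that a component of $G-T$ attached all along $R$ may place vertices of $Y$ inside every $G[\guc{r_{n+1}}]$, so one cannot simply argue that $Y$ eventually misses $\guc{r_{n+1}}$; confining the path to tree-edges avoids these attached components entirely.

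For uniqueness, if two distinct normal rays $R,R'$ both lie in $\omega$, they share an initial segment and then diverge at a vertex $z$; by Lemma~\ref{lemma: NTseparationAndComponents}(i) any two vertices on their tails past $z$ are separated by $\dc{z}$, so the finite set $\dc z$ separates the two tails and $R\not\sim R'$, a contradiction. Finally the map $\omega\mapsto R_\omega$ is well defined by existence and uniqueness, injective because each ray belongs to a unique end, and surjective because every normal ray $R$ is itself a comb attached to $T$, so its end lies in $\Abs T$ and is sent back to $R$.
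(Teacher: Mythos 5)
There is no in-paper proof to compare against: the paper imports this lemma verbatim from \cite[Lemma~2.11]{StarComb1StarsAndCombs} without proof, so your argument can only be judged on its own merits --- and it is correct. The existence construction is sound: following $\omega$ through the components $C(\dc{r_n},\omega)$, writing each as $G[\guc{x}]$ for some $x$ minimal in $T-\dc{r_n}$ via Lemma~\ref{lemma: NTseparationAndComponents}(ii), and using $\guc{x}\cap V(T)=\uc{x}$ to force $x\geq r_n$, hence $x$ a child of $r_n$, is airtight. The uniqueness step is complete: for vertices $x,y$ past the branch vertex $z$ one has $\dc{x}\cap\dc{y}=\dc{z}$, so Lemma~\ref{lemma: NTseparationAndComponents}(i) separates the two tails by the finite set $\dc{z}$. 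The bijection paragraph is likewise fine (surjectivity because a normal ray is its own comb attached to $T$, or simply because each $C(X,\omega)$ contains a tail of it).

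The one place you leave as a sketch is the verification that $R\in\omega$, and your sketch is the right one; the routine details it omits are: given finite $Y$, choose $n$ with $\uc{r_{n+1}}\cap Y=\emptyset$ (possible since each $y\in Y\cap V(T)$ has finite down-closure, so lies in only finitely many $\uc{r_m}$), and choose a tooth $t\in\uc{r_{n+1}}$ whose comb-path avoids $Y$ and attaches to $R_0$ beyond the last vertex of $R_0$ in $Y$ (each of these conditions excludes only finitely many teeth, because the comb's paths are disjoint and $Y$ is finite). The walk consisting of the tail $r_{n+1}R$, the tree-path $r_{n+1}Tt\subset\uc{r_{n+1}}$, the comb-path, and the corresponding tail of $R_0$ then avoids $Y$, so no finite set separates $R$ from $R_0$. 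You also correctly identified the genuine trap here: $\guc{r_{n+1}}$ may meet $Y$ for \emph{every} $n$, because a single component of $G-T$ whose neighbourhood is cofinal along $R$ can contain a vertex of $Y$, so one cannot finish by waiting for $\guc{r_{n+1}}$ to clear $Y$; confining the connecting route to $T$ and the comb is exactly the right fix.
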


Not every connected graph has a normal spanning tree. However, every countable connected graph does. More generally:

\begin{lemma}[{Jung~\cite{jung1969wurzelbaume}, \cite[Corollary~3.3]{StarComb1StarsAndCombs}}]\label{lemma: JungCountableSet}
Let $G$ be any graph and let $U\subset V(G)$ be any vertex set. 
If~$U$ is countable and $v$ is any vertex of $G$, then $G$ contains a normal tree that contains $U$ cofinally and is rooted in~$v$.
\end{lemma}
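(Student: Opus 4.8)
The plan is to build the desired tree as the union of an increasing sequence of finite normal trees, all rooted at $v$, each one reaching one more vertex of $U$ than the last. First I would reduce to the case that $G$ is connected: a normal tree rooted at $v$ is in particular connected and contains $v$, so I may replace $G$ by the component of $v$, whence $U$ lies in that component. Then I would fix an enumeration $u_1,u_2,\dots$ of $U$ and set $T_0:=\{v\}$, which is trivially normal. Given a finite normal tree $T_n\subset G$ rooted at $v$ with $u_1,\dots,u_n\in T_n$, I put $T_{n+1}:=T_n$ if $u_{n+1}\in T_n$, and otherwise I let $C$ be the component of $G-T_n$ containing $u_{n+1}$ and extend $T_n$ into $C$ up to $u_{n+1}$.

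The extension step is the heart of the argument. Since $T_n$ is normal the neighbourhood $N(C)$ is a chain in $T_n$, and as $T_n$ is finite this chain has a top vertex $t$; because $G$ is connected we have $N(C)\neq\emptyset$, so $t$ exists. I would choose any $t$--$u_{n+1}$ path $P$ whose interior lies in $C$ and set $T_{n+1}:=T_n\cup P$, ordering the new vertices of $P$ as a chain above $t$. The hard part will be to verify that $T_{n+1}$ is again normal, i.e.\ that the endvertices of every $T_{n+1}$-path in $G$ are comparable. The key geometric point is that $P$ is attached above the entire chain $N(C)$ at its top $t$: every edge from a new vertex back into $T_n$ lands in $N(C)\subseteq\dc{t}$, and any two new vertices are comparable, lying on a common chain. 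A short case analysis then settles an arbitrary $T_{n+1}$-path $Q$: if both endvertices lie in $T_n$ then $Q$ is already a $T_n$-path and we are done by normality of $T_n$; otherwise the interior of $Q$ lies in a single component of $G-T_{n+1}$ contained in $C$, whose neighbours all lie in the chain $\dc{u_{n+1}}$, forcing the endvertices of $Q$ to be comparable.

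Finally I would put $T:=\bigcup_n T_n$. This $T$ is a tree rooted at $v$ that contains $U$ by construction, and it is countable as a countable union of finite trees. To see that $T$ is normal, note that any $T$-path $Q$ is finite and its interior avoids all of $T$, so $Q$ is a $T_n$-path for every $n$ large enough to contain both its endvertices; hence these are comparable in $T_n$, and the tree-orders of the $T_n$ agree with that of $T$. For cofinality I would observe that every vertex $w$ added during the construction lies on the path $P$ used to reach some $u_i$, so $t\le w\le u_i$ and in particular $w\le u_i\in U$; together with $v\le u_1$ (or $v\in U$) this shows that every vertex of $T$ lies below a vertex of $U$, that is, $U$ is cofinal in $T$. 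The main obstacle throughout is the normality bookkeeping in the extension step; everything else is routine.
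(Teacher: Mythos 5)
Your proof is correct, but note that the paper itself gives no proof of Lemma~\ref{lemma: JungCountableSet}: it is imported from the literature, attributed to Jung and to Corollary~3.3 of the cited star--comb paper, where it is deduced from Jung's general theorem that a vertex set is contained cofinally in some normal tree as soon as it is a countable union of dispersed sets (a countable $U$ being a countable union of singletons). What you give instead is a self-contained recursive construction --- essentially the classical argument for normal spanning trees of countable connected graphs, adapted so that each step steers the tree towards the next vertex of $U$. The decisive points are all present and right: attaching the new path $P$ at the \emph{maximum} $t$ of the chain $N(C)$ guarantees that every neighbour in $T_n$ of a new vertex lies in $N(C)\subseteq\dc{t}$, so single-edge $T_{n+1}$-paths have comparable ends; a $T_{n+1}$-path with nonempty interior and a new endvertex has its interior in a component $D$ of $G-T_{n+1}$ with $D\subseteq C$, whence both of its ends lie in the chain $N(D)\subseteq\dc{u_{n+1}}$; and normality and cofinality pass to the increasing union because the tree-orders are nested and every added vertex lies below some $u_i$. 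Two small points to tighten: your ``otherwise'' clause should explicitly split off the single-edge case, since there the interior is empty and the component argument says nothing (you do state the needed fact in the preceding sentence, so this is presentational rather than a gap); and your reduction to connected $G$ uses the reading --- implicit in the paper's formulation as well --- that $U$ lies in the component of $v$, without which no tree rooted in $v$ can contain $U$ at all. Compared with the citation route, your argument is more elementary and self-contained, at the price of not yielding Jung's stronger theorem for arbitrary countable unions of dispersed sets.
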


If $H$ is a subgraph of $G$, then rays equivalent in $H$ remain equivalent in $G$; in other words, every end of $H$ can be interpreted as a subset of an end of $G$, so the natural inclusion map $\iota \colon \Omega(H) \to \Omega(G)$ is well-defined. A subgraph $H \subset G$ is \emph{end-faithful} if this inclusion map $\iota$ is a bijection. The terms \emph{end-injective} and \emph{end-surjective} are defined accordingly. Normal trees are always end-injective; hence, normal trees are end-faithful as soon as they are end-surjective.

\subsection{Tree-decompositions, separations and ends}\label{subsec:tdcsStreesEnds}

Recall from~\cite[Section~12.5]{diestel2015book} that a \emph{tree-decomposition} of a graph $G$ is an ordered pair $(T,\cV)$ of a tree $T$ and a family $\cV=(V_t)_{t\in V(T)}$ of \emph{parts} $V_t\subset V(G)$ such that:
\begin{enumerate}
    \item $V(G)=\bigcup_{t\in T}V_t$;
    \item every edge of $G$ has both endvertices in $V_t$ for some~$t$;
    \item $V_{t_1}\cap V_{t_3}\subset V_{t_2}$ whenever $t_2\in t_1 T t_3$.
\end{enumerate}

When we introduce a tree-de\-com\-po\-si\-tion as $(T,\cV)$ we tacitly assume $\cV=(V_t)_{t\in T}$.
Every edge of $T$ induces a `separation' of $G$, as follows.

A \emph{separation} of a graph $G$ is an unordered pair $\{A,B\}$ with $A\cup B=V(G)$ and such that no edge of $G$ `jumps' the \emph{separator} $A\cap B$, meaning that no edge of $G$ runs between $A\setminus B$ and $B\setminus A$.
Every edge $t_1 t_2\in T$ \emph{induces} the separation $\{U_1,U_2\}$ of $G$ that is defined by $U_1:=\bigcup_{t\in T_1}V_t$ and $U_2:=\bigcup_{t\in T_2}V_t$, where $T_1$ and $T_2$ are the components of $T-t_1 t_2$ containing $t_1$ and $t_2$ respectively.
Then the separator $U_1\cap U_2=V_{t_1}\cap V_{t_2}$ is an \emph{adhesion set} of~$(T,\cV)$.
We usually refer to the adhesion sets as separators.
The tree-decomposition $(T,\cV)$ has \emph{finite adhesion} if all its adhesion sets are finite.

Suppose now that $(T,\cV)$ has finite adhesion.
Then every end $\omega$ of $G$ either `lives' in a unique part $V_t$ or `corresponds' to a unique end $\eta$ of~$T$.
Here, $\omega$ \emph{lives} in $V_t$ if some (equivalently:\ every) ray in $\omega$ has infinitely many vertices in $V_t$.
And $\omega$ \emph{corresponds} to $\eta$ if some (equivalently:\ every) ray $R\in\omega$ follows the course of some (equivalently:\ every) ray $S\in\eta$ (in that for every tail $S'\subset S$ the ray $R$ has infinitely many vertices in $\bigcup_{t\in S'}V_t$).
If $f$ is a direction of $G$, then $f$ \emph{lives} in the part of $(T,\cV)$ or \emph{corresponds} to the end of $T$ that $\omega_f$ lives in or corresponds to, respectively.

If $(T,\cV)$ is a star-decomposition and $s$ is the centre of the star~$T$, then the induced separations of $(T,\cV)$ form a `star of separations', as follows.
Every separation $\{A,B\}$ has two \emph{orientations}, the \emph{oriented separations} $(A,B)$ and $(B,A)$.
A partial ordering ${\le}$ is defined on the set of all oriented separations of $G$ by letting
\begin{align*}
    (A,B)\le (C,D)\;:\Leftrightarrow\;A\subset C\text{ and }B\supset D.
\end{align*}
A \emph{star} (\emph{of separations}) is a set $\{\,(A^i,B^i)\colon i\in I\,\}$ of oriented separations $(A^i,B^i)$ of~$G$ such that $(A^i,B^i)\le (B^j,A^j)$ for every two distinct indices~$i,j\in I$.
If we write the edge set of $T$ as $\{t_i s\colon i\in I\}$, then the induced separations $\{U_1^i,U_2^i\}$ defined by $U_1^i:=V_{t_i}$ and $U_2^i:=\bigcup_{t\in T-t_i}V_t$ form a star of separations $\{\,(U_1^i,U_2^i)\colon i\in I\,\}$.
Conversely, every star of separations $\{\,(A^i,B^i)\colon i\in I\,\}$ defines a star-decomposition with leaf parts $A^i$ ($i\in I$) and central part $\bigcap_{i\in I}B^i$.
If this star is $\{\,(U_1^i,U_2^i)\colon i\in I\,\}$, we retrieve $(T,\cV)$.

The following non-standard notation often will be useful as an alternative perspective on separations of graphs.
For a vertex set $X\subset V(G)$ we denote the collection of the components of $G-X$ by $\cC_X$.
If any $X\subset V(G)$ and $\cC\subset\cC_X$ are given, then these give rise to a separation of $G$ which we denote by
\begin{align*} 
    \sep{X}{\cC}:=\big\{\;V\setminus V[\cC]\;,\;X\cup V[\cC]\;\big\}
\end{align*}
where $V[\cC]=\bigcup\,\{\,V(C)\mid C\in\cC\,\}$.
Note that every separation $\{A,B\}$ of $G$ can be written in this way. 
For the orientations of $\sep{X}{\cC}$ we write
\begin{align*}
\rsep{X}{\cC}:=\big(\;V\setminus V[\cC]\;,\;X\cup V[\cC]\;\big)\quad\text{and}\quad\lsep{\cC}{X}:=\big(\;V[\cC]\cup X\;,\;V\setminus V[\cC]\;\big).
\end{align*}
If $C$ is a component of $G-X$ we write $\sep{X}{C}$ instead of $\sep{X}{\{C\}}$.  Similarly, we write $\rsep{X}{C}$ and $\lsep{C}{X}$ instead of $\rsep{X}{\{C\}}$ and $\lsep{\{C\}}{X}$, respectively.

We conclude this section with a paragraph on how tree-decompositions can be used to distinguish ends of graphs.
The \emph{order} of a separation is the cardinality of its separator.
A finite-order separation $\{A,B\}$ of $G$ \emph{distinguishes} two ends $\omega_1$ and $\omega_2$ of $G$ if $C(A\cap B,\omega_1)\subset G[A\setminus B]$ and $C(A\cap B,\omega_2)\subset G[B\setminus A]$ (or vice versa).
If $\{A,B\}$ distinguishes $\omega_1$ and $\omega_2$ and has minimal order among all the separations of $G$ that distinguish $\omega_1$ and $\omega_2$, then $\{A,B\}$ distinguishes $\omega_1$ and $\omega_2$ \emph{efficiently}.
The ends $\omega_1$ and $\omega_2$ are said to be $k$-\emph{distinguishable} for an integer $k\ge 0$ if there is a separation of $G$ of order at most $k$ that distinguishes $\omega_1$ and $\omega_2$.
We say that $(T,\cV)$ \emph{distinguishes} $\omega_1$ and $\omega_2$ if some edge of $T$ induces a separation of $G$ that distinguishes $\omega_1$ and~$\omega_2$; it distinguishes $\omega_1$ and $\omega_2$ \emph{efficiently} if the induced separation can be chosen to distinguish $\omega_1$ and $\omega_2$ efficiently.
The following theorem is an immediate consequence of~\cite[Corollary~6.6]{CanonicalTreeOfTangles} and the construction in~\cite[Theorem~6.2]{CanonicalTreeOfTangles} that leads to the corollary.
(The construction in the proof of Theorem~6.2 ensures that the tree of tree-decompositions obtained in Corollary~6.6 has finite height, and  it is straightforward to combine all the tree-decompositions into a single one.)

\begin{theorem}\label{magicTDC}
Every connected graph $G$ has for every number $k\in\N$ a tree-decomposition that efficiently distinguishes all the $k$-distinguishable ends of~$G$.
\end{theorem}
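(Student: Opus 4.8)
The plan is to obtain the statement from the canonical tangle-tree machinery of~\cite{CanonicalTreeOfTangles} and then to collapse the \emph{tree of tree-decompositions} that this machinery produces into a single tree-decomposition, using that the tree in question has finite height.

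First I would phrase end-distinguishing as profile-distinguishing. For a fixed $k\in\N$, every end $\omega$ of $G$ orients each separation $\{A,B\}$ of $G$ of order at most $k$ towards the side in which it lives, that is, towards $(A,B)$ if $C(A\cap B,\omega)\subset G[B\setminus A]$. This orientation is a profile of the separations of order at most $k$, and two ends are $k$-distinguishable precisely when the profiles they induce differ; moreover a separation distinguishes two ends efficiently exactly when it does so as a distinguisher of their profiles. Applied to the set of these end-profiles, \cite[Corollary~6.6]{CanonicalTreeOfTangles} then yields a tree of tree-decompositions of $G$, all of whose adhesion sets have order at most $k$, whose induced separations efficiently distinguish every two $k$-distinguishable ends of $G$. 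Inspecting the construction underlying the corollary in~\cite[Theorem~6.2]{CanonicalTreeOfTangles}, the nesting of tree-decompositions terminates after a bounded number of refinements, so this tree of tree-decompositions has finite height.

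The core of the argument, and the step I expect to be most delicate, is to merge these finitely many nested layers into one tree-decomposition $(T,\cV)$. I would argue by induction on the height. A single refinement step takes a tree-decomposition $(T',\cV')$ of $G$ together with, for each node $t$ of $T'$, a tree-decomposition of the torso of the part $V_t'$; I glue these into one tree by replacing each $t$ by its refining decomposition tree and re-attaching each edge of $T'$ at $t$ to the unique part of the refining tree that contains the corresponding adhesion set, such a part existing because that set is a clique in the torso. The points requiring care are that the three tree-decomposition axioms survive this gluing -- the consistency axiom~(iii) being the critical one, as one must check that every vertex lying in two parts of the merged decomposition also lies in every part on the tree-path between them -- and that the adhesion does not grow, which holds because every newly introduced separator is an adhesion set of a refining tree-decomposition and hence has order at most $k$. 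Finite height guarantees that finitely many such steps suffice.

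Finally I would verify that merging preserves the distinguishing behaviour. The separations induced by the edges of the merged tree $T$ are exactly the separations induced by the edges occurring across the layers of the original tree of tree-decompositions, and each of these was chosen to distinguish its pair of $k$-distinguishable ends efficiently. Consequently $(T,\cV)$ is a single tree-decomposition of $G$, of adhesion at most $k$ and hence of finite adhesion, that efficiently distinguishes all $k$-distinguishable ends of $G$, as the theorem asserts.
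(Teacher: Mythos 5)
Your proposal is correct and follows essentially the same route as the paper: the paper derives the theorem as an immediate consequence of \cite[Corollary~6.6]{CanonicalTreeOfTangles}, noting that the construction in \cite[Theorem~6.2]{CanonicalTreeOfTangles} gives the tree of tree-decompositions finite height and that combining its layers into a single tree-decomposition is straightforward. The only difference is that you spell out the gluing step (re-attaching edges at the part containing the adhesion set, which is a clique in the torso, and checking the axioms and the adhesion bound) that the paper leaves implicit.
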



\section{Countably determined directions and the first axiom\texorpdfstring{\\}{ }of countability}\label{section: first coutable}

\noindent In this section we characterise for every graph $G$, by unavoidable substructures, both the countably determined directions of $G$ and its directions that are not countably determined.

Given a graph $G$ we call a ray $R\subset G$ \emph{topological} in $G$ if the end $\omega$ of $G$ that contains $R$ has a countable neighbourhood base $\{\,\Omega(X_n,\omega)\colon n\in\N\,\}$ in $\Omega(G)$ where each vertex set $X_n$ consists of the first $n$ vertices of~$R$.
Our first lemma shows that rays are directional if and only if they are topological:

\begin{lemma}\label{directionalTopological}
For every graph $G$ and every ray $R\subset G$ the following assertions are equivalent:
\begin{enumerate}
    \item $R$ is directional in $G$.
    \item $R$ is topological in $G$.
\end{enumerate}
\end{lemma}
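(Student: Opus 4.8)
The plan is to unravel both definitions and show they describe the same countable set of directional choices. Let $R = r_0 r_1 r_2 \dots$ be a ray in $G$ and let $\omega$ be the end containing $R$, so that $f = f_\omega$ is the direction it induces. For each $n$ write $X_n := \{r_0,\dots,r_{n-1}\}$ for the set of the first $n$ vertices of $R$, and let $C_n := C(X_n,\omega)$ be the component of $G - X_n$ containing the tail of $R$. The directional choices defined by $R$ are then precisely the pairs $(X_n,C_n)$, and by construction $f(X_n) = C_n$ for every $n$. The key observation linking the two notions is that, in the end space, the basic open set $\Omega(X_n,C_n) = \Omega(X_n,\omega)$ consists of exactly those ends $\eta$ with $f_\eta(X_n) = C_n$. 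Thus a directional choice $(X_n,C_n)$ distinguishes $f$ from $f_\eta$ if and only if $\eta \notin \Omega(X_n,\omega)$.

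First I would prove (ii)$\Rightarrow$(i). Assume $R$ is topological, so $\{\Omega(X_n,\omega) : n\in\N\}$ is a neighbourhood base at $\omega$. Let $\eta \neq \omega$ be any other end, inducing a direction $h \neq f$. Since the end space is Hausdorff, there is a basic open set separating the two, and by the neighbourhood-base property there is some $n$ with $\omega \in \Omega(X_n,\omega)$ but $\eta \notin \Omega(X_n,\omega)$. Translating back, this says $f(X_n) = C_n$ while $h(X_n) \neq C_n$, i.e.\ the directional choice $(X_n,C_n)$ distinguishes $f$ from $h$. Hence the countably many choices defined by $R$ distinguish $f$ from every other direction, so $R$ is directional.

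Next I would prove (i)$\Rightarrow$(ii). Assume $R$ is directional, so the choices $(X_n,C_n)$ distinguish $f$ from every other direction. To show $\{\Omega(X_n,\omega):n\in\N\}$ is a neighbourhood base at $\omega$, I take an arbitrary basic open neighbourhood $\Omega(Y,\omega)$ of $\omega$ (with $Y\in\cX$) and must find some $n$ with $\Omega(X_n,\omega)\subseteq\Omega(Y,\omega)$. The natural route is contrapositive: if no such $n$ existed, then for every $n$ there is an end $\eta_n \in \Omega(X_n,\omega)\setminus\Omega(Y,\omega)$, and I would try to extract from the $\eta_n$ an end $\eta\neq\omega$ that nonetheless agrees with $f$ on every $X_n$, contradicting directionality. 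Since $\eta_n$ agrees with $f$ on $X_n$ but all $\eta_n$ live outside the fixed component $C(Y,\omega)$, a limiting/compactness argument should produce a single end $\eta$ with $f_\eta(X_n)=C_n$ for all $n$ yet $f_\eta(Y)\neq f(Y)$, so $\eta\neq\omega$ is not distinguished from $f$ by any choice defined by $R$.

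I expect this last extraction step to be the main obstacle: one needs to pass from the sequence $(\eta_n)$ to a genuine limit end that simultaneously respects all the constraints $f_\eta(X_n)=C_n$ while failing the constraint at $Y$. The cleanest way is probably to note that the ends living outside $C(Y,\omega)$ and inside every $C_n$ form a nested sequence of nonempty closed sets in the (compact, or at least closed-in-the-Freudenthal-sense) end space, so their intersection is nonempty and contains the desired $\eta$; alternatively one can build $\eta$ directly as a direction by choosing components compatibly, using that each $\cC_{X_n}$ is finite enough along $R$. Care is needed because $G$ need not be locally finite, so instead of appealing to compactness of the whole end space I would argue directly that the assignment $X\mapsto$ (the $\eta_n$-choice for large $n$) stabilises to a well-defined direction $f_\eta$, which is the heart of the argument.
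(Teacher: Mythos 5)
Your direction (ii)$\Rightarrow$(i) is correct and is essentially the paper's own argument (Hausdorffness plus the neighbourhood-base property). The genuine gap is in (i)$\Rightarrow$(ii), and it sits exactly at the step you yourself call ``the heart of the argument'': producing a single end $\eta\neq\omega$ with $\eta\in\Omega(X_n,\omega)$ for every $n$ but $\eta\notin\Omega(Y,\omega)$. Neither of your proposed justifications delivers this. The nested-closed-sets route needs compactness, and end spaces of graphs are not compact; worse, the general principle it rests on is simply false in end spaces. For example, if $G$ is the union of infinitely many rays sharing exactly their first vertex $v$, and $X_n$ consists of the second vertices of the first $n$ rays, then the basic clopen sets $\Omega(X_n,B_n)$, where $B_n$ is the component containing $v$, are nested, nonempty, and have empty intersection. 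So ``nested nonempty clopen sets of ends meet'' cannot be cited; in fact, for the particular sets $\Omega(X_n,\omega)\setminus\Omega(Y,\omega)$ the nonempty-intersection statement is essentially a reformulation of the lemma itself, so invoking it without proof is circular. Your fallback claim, that the assignment $X\mapsto{}$``the $\eta_n$-choice for large $n$'' stabilises, is also unproven and not true as stated: for a fixed finite set $X$ (for instance $X=Y$ itself) the components $C(X,\eta_n)$ need not stabilise, since the witnesses $\eta_n$ are arbitrary and $G-X$ may have infinitely many components; a direction must make compatible choices at \emph{all} finite vertex sets, not just along the chain $X_0\subset X_1\subset\cdots$, and the existence of such a compatible extension is precisely what has to be proved.

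For comparison, the paper avoids constructing a limit end altogether. Assuming some $\Omega(X,\omega)$ contains no $\Omega(X_n,\omega)$, it recursively builds infinitely many pairwise disjoint rays $R_0,R_1,\dots$, each meeting $R$ exactly in its first vertex and each belonging to an end outside $\Omega(X,\omega)$; directionality is used at every step, to pick $k$ so large that $(X_k,f_\omega(X_k))$ distinguishes $f_\omega$ from the ends of $R_0,\dots,R_{n-1}$, which forces the new ray (chosen inside $C(X_k,\omega)$) to be disjoint from the old ones. The contradiction is then purely combinatorial: since $X$ is finite, some $R_n$ avoids $X$ and starts far enough along $R$, hence lies in $C(X,\omega)$, contradicting that its end is not in $\Omega(X,\omega)$. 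Your extraction step can in fact be repaired, but only by a real argument of this combinatorial kind: for instance, a pigeonhole argument over the finite set $Y$ shows that infinitely many witnesses $\eta_n$ reach a common vertex $y^\ast\in Y$ with $y^\ast\in C(X_m,\omega)$ for all $m\in\N$ by paths avoiding $R$, whence those $\eta_n$ already lie in every $\Omega(X_m,\omega)$. That argument (or the paper's) is the actual content of the implication, and it is missing from your proposal.
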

\begin{proof}
Let us write $\omega$ for the end of $G$ that is represented by $R$, and let us denote by $X_n$ the set of the first $n$ vertices of $R$.

(ii)$\Rightarrow$(i) 
By assumption, $\{\, \Omega(X_n,\omega) \colon n\in\N\,\}$ is a countable neighbourhood base for $\omega \in \Omega(G)$.
Then $f_\omega$ is countably determined by the directional choices $(X_n,f_\omega(X_n))$ because the end space is Hausdorff.

(i)$\Rightarrow$(ii)
We claim that $\{\,\Omega(X_n,\omega)\colon n\in\N\,\}$ is a neighbourhood base for \mbox{$\omega\in\Omega(G)$}. 
Now, suppose for a contradiction that there is  a basic open neighbourhood $\Omega(X,\omega)$ of $\omega$ in $\Omega(G)$ that contains none of the sets $\Omega(X_n,\omega)$. 
We recursively construct a sequence of pairwise disjoint rays $R_n$ all having precisely their first vertex on $R$ and belonging to ends not in $\Omega(X, \omega)$. 
Having these rays at hand will give the desired contradiction; as $X$ is finite one of these rays lies in $C(X,\omega)$ contradicting that its end is not in $\Omega(X,\omega)$. 

So suppose we have found $R_0,\ldots, R_{n-1}$.
In order to define $R_n$, choose $k \in \N$ large enough that $(X_k,f_\omega(X_k))$ distinguishes $f_\omega$ from all directions induced by the rays $R_0,\ldots, R_{n-1}$ (if $n=0$ pick $k=0$).
Such a $k$ exists because $R$ is directional.
Since the rays $R_0, \ldots ,R_{n-1}$ have precisely their first vertex on $R$ and $X_k$ consists of vertices of $R$, none of the rays $R_0, \ldots ,R_{n-1}$ meets the component $f_\omega(X_k)=C(X_k,\omega)$.
 By our assumption there is an end, $\eta$ say, that is contained in $\Omega(X_k,\omega)$ but not  in $\Omega(X,\omega)$. 
We choose any ray of $\eta$ in $C(X_k,\omega)$ having precisely its first vertex on $R$ to be the $n$th ray $R_n$.
\end{proof}

\begin{theorem}\label{thm: equiv. first countable}
For every graph $G$ and every end $\omega$ of $G$ the following assertions are equivalent:

\begin{enumerate}
    \item The end space of $G$ is first countable at $\omega$.
    \item The direction $f_\omega$ is countably determined in $G$.
    \item The end $\omega$ is represented by a directional ray.
    \item The end $\omega$ is represented by a topological ray.
\end{enumerate}
\end{theorem}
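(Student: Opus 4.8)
The plan is to close the cycle of implications (i)$\Rightarrow$(ii)$\Rightarrow$(iii)$\Rightarrow$(iv)$\Rightarrow$(i), letting Lemma~\ref{directionalTopological} supply (iii)$\Leftrightarrow$(iv) for free so that everything reduces to one substantial step. Three of the four implications are immediate from the definitions together with the fact that the end space is Hausdorff. If $\omega$ is represented by a topological ray then by definition $\omega$ has a countable neighbourhood base, giving (iv)$\Rightarrow$(i). If $\omega$ is represented by a directional ray then its finite initial segments provide countably many directional choices that distinguish $f_\omega$ from every other direction, giving (iii)$\Rightarrow$(ii). And if the end space is first countable at $\omega$, then refining any countable neighbourhood base to basic clopen sets $\Omega(X_n,\omega)$ yields again a neighbourhood base, so Hausdorffness forces $\bigcap_n\Omega(X_n,\omega)=\{\omega\}$ and the choices $(X_n,C(X_n,\omega))$ determine $f_\omega$; this gives (i)$\Rightarrow$(ii). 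The heart of the theorem is therefore the single implication (ii)$\Rightarrow$(iii): from an \emph{arbitrary} countable determining family one must manufacture a directional ray.

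For (ii)$\Rightarrow$(iii), suppose $f_\omega$ is determined by directional choices $(X_n,C_n)$. Since only choices with $C_n=C(X_n,\omega)$ can ever distinguish $f_\omega$, I may assume this, so that $\bigcap_n\Omega(X_n,\omega)=\{\omega\}$. The key idea is to absorb all the separators into a single normal tree. Put $U:=\bigcup_nX_n\cup V(R_0)$ for some ray $R_0\in\omega$; this set is countable, so by Jung's Lemma~\ref{lemma: JungCountableSet} there is a normal tree $T\subset G$ containing $U$ cofinally. As $R_0\subset T$ we have $\omega\in\Abs{T}$, so by Lemma~\ref{lemma: NormalTreeNormalRay} the end $\omega$ contains a unique normal ray $R=r_0r_1\cdots$ of $T$, whose finite initial segments are exactly the down-closures $\dc{r_k}$. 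I would then check that $R$ is directional, i.e.\ that $\bigcap_k\Omega(\dc{r_k},\omega)=\{\omega\}$. By Lemma~\ref{lemma: NTseparationAndComponents}(ii) the component $C(\dc{r_k},\omega)$ is spanned by $\guc{r_{k+1}}$; and because each $X_n$ lies in $V(T)$, its finite down-closure meets $R$ in an initial segment only, whence for large $k$ one has $\guc{r_{k+1}}\cap X_n=\emptyset$ and therefore $\Omega(\dc{r_k},\omega)\subset\Omega(X_n,\omega)$. Consequently $\bigcap_k\Omega(\dc{r_k},\omega)\subset\bigcap_n\Omega(X_n,\omega)=\{\omega\}$, so $R$ is directional, establishing (iii).

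The step I expect to be most delicate is exactly this last verification, and the subtlety is what makes routing through (iii) rather than (i) worthwhile. Had I instead tried to prove directly that the initial segments of $R$ form a topological \emph{neighbourhood base}, I would have had to control arbitrary finite vertex sets $X$, including ones lying outside $T$; such an $X$ can meet every $C(\dc{r_k},\omega)$ through an off-tree component attached cofinally to $R$ — equivalently, through a vertex dominating $\omega$ — and excluding genuine "escaping" ends there would demand a separate star--comb argument. Lemma~\ref{directionalTopological} circumvents all of this: since directional and topological rays coincide, it suffices to verify the purely combinatorial intersection condition $\bigcap_k\Omega(\dc{r_k},\omega)=\{\omega\}$, for which the clean containment $\Omega(\dc{r_k},\omega)\subset\Omega(X_n,\omega)$ — available precisely because the separators $X_n$ were placed inside $T$ — is all that is needed. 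Thus the only genuine inputs are Jung's Lemma and the separation behaviour of normal trees.
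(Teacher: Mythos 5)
Your proposal is correct and follows essentially the same route as the paper: the cycle (i)$\Rightarrow$(ii)$\Rightarrow$(iii)$\Rightarrow$(iv)$\Rightarrow$(i) with Lemma~\ref{directionalTopological} handling (iii)$\Rightarrow$(iv), and the key step (ii)$\Rightarrow$(iii) done by absorbing the separators $X_n$ together with a ray of $\omega$ into a normal tree via Lemma~\ref{lemma: JungCountableSet}, extracting the normal ray of $\omega$ via Lemma~\ref{lemma: NormalTreeNormalRay}, and using Lemma~\ref{lemma: NTseparationAndComponents} to get $C(\dc{r_k},\omega)\subset C(X_n,\omega)$ once $r_k$ lies beyond $\dc{X_n}$. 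The paper phrases this last verification end-by-end (picking $\eta\neq\omega$ and a separating $X_n$) while you phrase it as the intersection condition $\bigcap_k\Omega(\dc{r_k},\omega)=\{\omega\}$, but these are the same argument.
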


\noindent This theorem clearly implies Theorem~\ref{introTheoremOne}:

\begin{proof}[{Proof of Theorem~\ref{introTheoremOne}}]
Theorem~\ref{thm: equiv. first countable}~(ii)$\Leftrightarrow$(iii) is the statement of Theorem~\ref{introTheoremOne}.
\end{proof}

\begin{proof}
(i)$\Rightarrow$(ii)
Let $\{\, \Omega(X_n,\omega) \colon n\in\N\,\}$ be a countable neighbourhood base of basic open sets for $\omega \in \Omega(G)$.
Then $f_\omega$ is countably determined by its countably many directional choices $(X_n,f_\omega(X_n))$ because the end space is Hausdorff.

(ii)$\Rightarrow$(iii) 
Let $\{\, \Omega(X_n,f_\omega(X_n)) \colon n\in\N\,\}$ be a countable set of directional choices that distinguish $f_\omega$ from every other direction.
Fix any ray $R \in \omega$ and denote by $U$ the union of $V(R)$ and all the $X_n$.
By Lemma~\ref{lemma: JungCountableSet} there is a normal tree $T\subseteq G$ that contains $U$ cofinally. 
As $V(R) \subseteq T$ we have that $\omega \in \Abs{T}$. 
Now, by Lemma~\ref{lemma: NormalTreeNormalRay}, there is a normal ray $R_\omega$ in $T$ belonging to $\omega$. 

We claim that $R_\omega$ is directional in~$G$.
For this, it suffices to show that for every other end $\eta \neq \omega$ of $G$ there is a finite initial segment of $R_\omega$ separating $\omega$ and $\eta$.
By assumption, there is $n\in\N$ such that $X_n$ separates $\omega$ and $\eta$.

Let $v$ be any vertex of the ray $R_\omega-\dc{X_n}$ where the down-closure is taken in $T$.
Since $T$ is normal in~$G$, we have $C(\lceil v \rceil, \omega)  \subset C(X_n, \omega)$ by Lemma~\ref{lemma: NTseparationAndComponents}.
In particular, the initial segment $\lceil v \rceil$ of $R_\omega$ separates~$\omega$ from~$\eta$.

(iii)$\Rightarrow$(iv) This is Lemma~\ref{directionalTopological} (i)$\Rightarrow$(ii).

(iv)$\Rightarrow$(i) This holds by the definition of a topological ray.
\end{proof}

Our second main result, the characterisation by unavoidable substructures of the directions of any given graph that are not countably determined in that graph, needs some preparation.

\begin{definition}[Generalised paths]
For a graph $G$ a \emph{generalised path} in~$G$ with \emph{endpoints} $\omega_1\neq \omega_2\in\Omega(G)$ is an ordered pair $(P,\{\omega_1,\omega_2\})$ where $P\subset G$ is one of the following:
\begin{itemize}[label=\textbf{--}]
    \item a double ray with one tail in the end $\omega_1$ and another tail in the end $\omega_2$;
    \item a finite path $v_0\ldots v_k$ such that $v_0$ dominates the end $\omega_1$ and $v_k$ dominates the end $\omega_2$;
    \item a ray in $\omega_1$ whose first vertex dominates $\omega_2$.
\end{itemize}
Two generalised paths $(P,\Psi)$ and $(P',\Psi')$ are \emph{vertex-disjoint} if $P$ and $P'$ are disjoint.
Two generalised paths $(P,\Psi)$ and $(P',\Psi')$ are \emph{disjoint} if they are vertex-disjoint and $\Psi\cap\Psi'=\emptyset$.
\end{definition}

\begin{definition}[Generalised star and sun]
For a graph $G$ a \emph{generalised star} in $G$ with \emph{centre} $\omega\in\Omega(G)$ is a collection of pairwise vertex-disjoint generalised paths $\{\,(P^i,\{\omega,\omega^i\})\colon i\in I\,\}$ such that each end $\omega^i$ is distinct from all other ends $\omega^{j}$ with $j\neq i\in I$. Then the ends $\omega^i$ with $i\in I$ are the \emph{leaves} of the generalised~star.

A generalised star $\{\,(P^i,\{\omega,\omega^i\})\colon i\in I\,\}$ is \emph{proper} if either every path $P^i$ is a double ray or every path $P^i$ is a ray in $\omega^i$ whose first vertex dominates $\omega$.
A proper generalised star in $G$ with centre $\omega$ is also called a \emph{\sun } in $G$ with \emph{centre} $\omega$.
\end{definition}

\noindent In Figures~\ref{fig:SunRay} and~\ref{fig:SunDoubleRay} we see two examples of suns of size eight centred at an end $\omega$. If we increase their size from eight to $\aleph_1$ in the obvious way, then the direction $f_\omega$ is no longer countably determined.



\begin{figure}
\centering
\begin{minipage}{.5\textwidth}
  \centering
  \includegraphics[width=.8\linewidth]{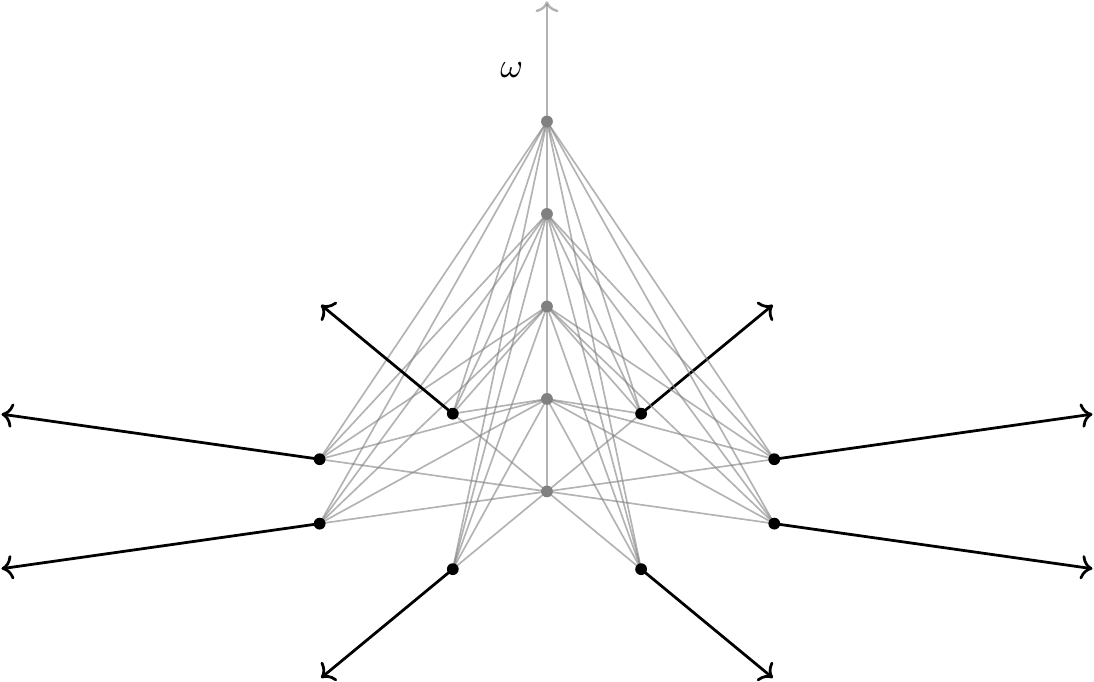}
  \captionof{figure}{The black rays form a sun centred at $\omega$}
  \label{fig:SunRay}
\end{minipage}%
\begin{minipage}{.5\textwidth}
  \centering
  \includegraphics[width=.8\linewidth]{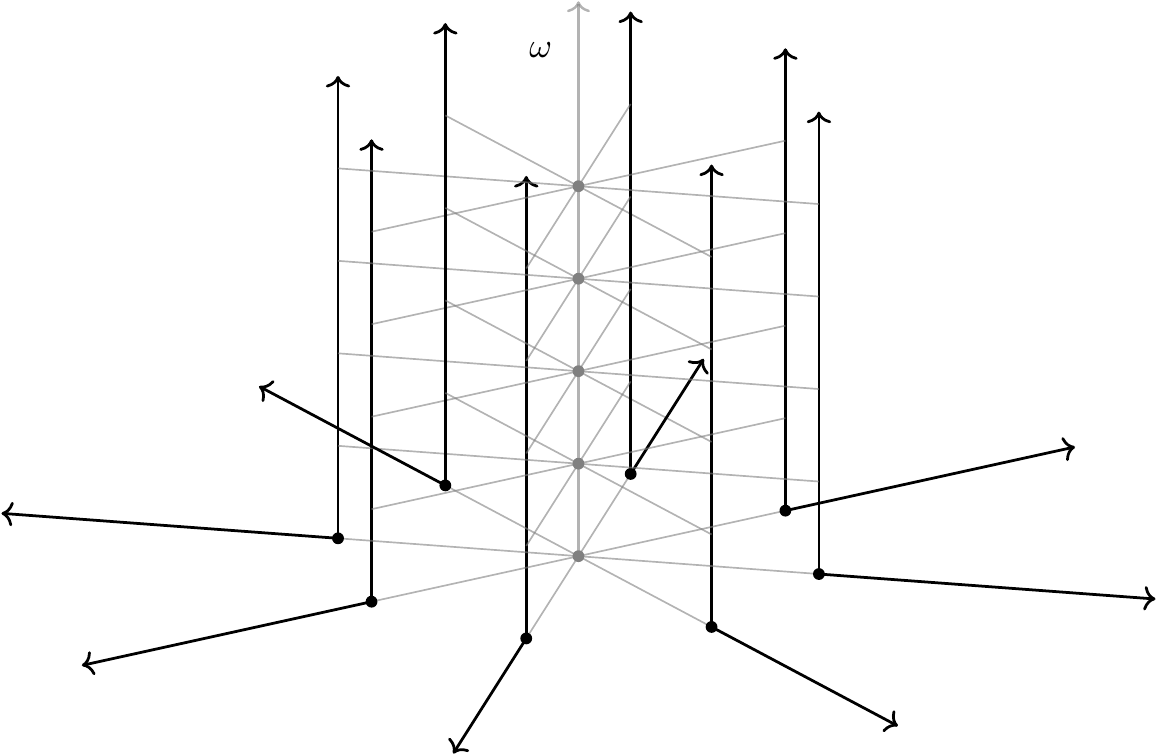}
  \captionof{figure}{The black double rays form a sun centred at $\omega$}
  \label{fig:SunDoubleRay}
\end{minipage}
\end{figure}

\begin{theorem}\label{main:FirstDualStars}
For every graph $G$ and every end $\omega$ of $G$ the following assertions are equivalent:
\begin{enumerate}
    \item The end space of $G$ is not first countable at $\omega$.
    \item There is an uncountable sun in $G$ centred at~$\omega$.
\end{enumerate}

Moreover, if \emph{(ii)} holds, then we find an uncountable sun $\{\,(P^i,\{\omega,\omega^i\})\mid i\in I\,\}$ in $G$ and an uncountable star-decomposition $(T,\cV)$ of $G$ of finite adhesion such that $\omega$ lives in the central part and every $\omega^i$ lives in its own leaf part.
\end{theorem}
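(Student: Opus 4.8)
The plan is to read assertion (i) through Theorem~\ref{thm: equiv. first countable}: since the end space is first countable at $\omega$ exactly when $f_\omega$ is countably determined, (i) is equivalent to $f_\omega$ \emph{not} being countably determined, and I shall work with the latter throughout. I would dispatch the easy implication (ii)$\Rightarrow$(i) first. Suppose a countable set of directional choices $(X_n,f_\omega(X_n))$ determined $f_\omega$, and set $X:=\bigcup_n X_n$, a countable vertex set. As the paths of an uncountable sun are pairwise vertex-disjoint, all but countably many of them avoid $X$ altogether; fix such a path $P^i$ whose leaf end $\omega^i$ is distinguished from $\omega$ by some $X_n\subset X$. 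If $P^i$ is a double ray, then being connected and disjoint from $X_n$ it lies in a single component of $G-X_n$, yet its two tails lie in $C(X_n,\omega)$ and in $C(X_n,\omega^i)\neq C(X_n,\omega)$, a contradiction. If $P^i$ is a ray in $\omega^i$ whose first vertex $v$ dominates $\omega$, then $v\in C(X_n,\omega)$ since $v$ dominates $\omega$ and $v\notin X_n$, so the connected $X_n$-avoiding ray $P^i$ lies in $C(X_n,\omega)$, contradicting that its tail lies in $C(X_n,\omega^i)$. Hence no countable determining family exists, giving (i).

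For (i)$\Rightarrow$(ii) together with the \emph{moreover}, I would first produce the star-decomposition and then hang the sun on it. For each $k\in\N$ let $(T_k,\cV_k)$ be a tree-decomposition efficiently distinguishing all $k$-distinguishable ends (Theorem~\ref{magicTDC}); its adhesion is at most $k$, hence finite. In each $(T_k,\cV_k)$ the end $\omega$ either lives in a part or corresponds to an end of $T_k$. The key claim is that the failure of countable determination forces, for some $k$, the end $\omega$ to live in a part $V_s$ off which \emph{uncountably many} pairwise distinct ends are split by the edges of $T_k$ at $s$. I would prove this by contraposition: if for every $k$ only countably many ends were split off at $\omega$'s location (and, where $\omega$ corresponds to an end of $T_k$, the separators read off along $\omega$'s ray in $T_k$ form a countable nested chain), then collecting all these separators over $k\in\N$ would yield a countable family of directional choices distinguishing $f_\omega$ from every other direction, contradicting (i). Regrouping the branches of $T_k$ at $s$ into a star of separations then yields an uncountable star-decomposition $(T,\cV)$ of finite adhesion with $\omega$ in the central part and, for uncountably many leaves $i$, a distinct end $\omega^i$ living in the leaf part, separated from $\omega$ by the efficient adhesion set $S^i$ with $|S^i|\le k$.

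It remains to turn this configuration into a \emph{proper} generalised star, i.e.\ a sun. Choosing a ray $R^i\in\omega^i$ inside each leaf part beyond $S^i$, the tails of the $R^i$ are automatically pairwise disjoint, since distinct leaf parts meet only in the central part. To anchor them to $\omega$ I would apply a $\Delta$-system (sunflower) argument to the finite separators $S^i$: an uncountable family of sets of size at most $k$ contains an uncountable sunflower with a finite core $Y$ and pairwise disjoint petals. Absorbing $Y$ into the central separator and discarding the at most countably many leaves it controls, I may assume the separators are pairwise disjoint. A dichotomy then fixes the type of sun. If uncountably many leaves have a separator vertex dominating $\omega$, these distinct dominating vertices start pairwise-disjoint rays into the respective $\omega^i$, producing an uncountable ray-sun. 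Otherwise uncountably many leaves have no separator vertex dominating $\omega$, and for each such leaf I would route a double ray with one tail in $\omega$ through the central part and one tail in $\omega^i$ through the leaf, using a fixed normal tree in the central part that captures $\omega$ (Lemma~\ref{lemma: JungCountableSet} and Lemma~\ref{lemma: NormalTreeNormalRay}) to keep the central tails disjoint; this produces an uncountable double-ray-sun.

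The step I expect to be the main obstacle is exactly this last one: arranging the generalised paths to be pairwise \emph{vertex}-disjoint while keeping them all of one uniform type. Disjointness of the leaf-side tails is free, but the central-side portions and the anchoring separator vertices can a priori collide; it is precisely the failure of first countability that prevents uncountably many of the $\omega^i$ from being cut off from $\omega$ by a single finite separator, and this must be extracted quantitatively through the bound $|S^i|\le k$, the $\Delta$-system, and a careful simultaneous routing of the central tails. Massaging the three kinds of generalised path (double ray, finite path between two dominating vertices, and a ray dominating its partner end) into the two admissible sun-types is routine by comparison, using that a vertex dominating an end starts a ray into it.
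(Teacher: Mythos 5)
Your argument for (ii)$\Rightarrow$(i) is correct and is essentially the paper's (read contrapositively through Theorem~\ref{thm: equiv. first countable}). The genuine gap is in (i)$\Rightarrow$(ii), at exactly the step you flagged: passing from the star-decomposition back to a sun. The configuration ``uncountably many ends, each living in its own leaf part of a finite-adhesion star-decomposition with $\omega$ in the central part'' is strictly weaker than the existence of an uncountable sun centred at $\omega$, and your dichotomy on whether separator vertices dominate $\omega$ does not bridge the difference. Concretely, your second branch fails: let $G$ consist of a ray $R$, uncountably many vertices $v_\alpha$ each joined to \emph{all} of $R$, and a ray $Q^\alpha$ attached at each $v_\alpha$. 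Here $\Omega(G)$ is not first countable at the end $\omega$ of $R$ (the rays $Q^\alpha$, anchored at the dominating vertices $v_\alpha$, form a ray-type sun), but every ray in $\omega$ uses infinitely many vertices of the countable set $V(R)$, so $\omega$ admits no uncountable family of disjoint rays and hence no uncountable double-ray sun centred at $\omega$ exists at all. Your construction can nevertheless arrive at the star-decomposition whose leaf parts are the tails of the $Q^\alpha$ with the efficient order-one separators $S^\alpha=\{q^\alpha_1\}$, one vertex beyond $v_\alpha$: no separator vertex dominates $\omega$, so your dichotomy sends you to the double-ray case, which is impossible. The moral is that the anchor vertices of the sun (here the $v_\alpha$) may lie in the central part, invisible to the separators, so no amount of $\Delta$-system cleaning of the $S^i$ recovers them. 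Relatedly, your device for keeping the central tails disjoint, ``a fixed normal tree in the central part'', cannot work: a normal tree obtained from Lemma~\ref{lemma: JungCountableSet} over a countable set is countable and cannot contain uncountably many disjoint tails.

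The paper avoids this trap by never routing through a star-decomposition in the first place. It fixes the set $\Delta$ of vertices dominating $\omega$ and a maximal family $\mathcal{R}$ of pairwise disjoint rays in $\omega$, puts $A=\Delta\cup V[\mathcal{R}]$, and takes a maximal family $\mathcal{P}$ of pairwise disjoint rays that \emph{start in $A$} and belong to other ends; the anchoring is thus built in from the outset, and if $\mathcal{P}$ is uncountable a pigeonhole argument splits it into the two admissible sun types (rays from $\Delta$, or rays from distinct members of $\mathcal{R}$ which are then extended to double rays). The real work is showing that $\mathcal{P}$ cannot be countable: a normal tree containing $V[\mathcal{P}]$ cofinally, together with the star--comb lemma and the maximality of $\mathcal{R}$ and $\mathcal{P}$, produces a countable neighbourhood base at $\omega$, contradicting (i). The star-decomposition of the ``moreover'' part is then obtained \emph{from} the sun by a separate Zorn argument on golden stars of separations, not the other way around. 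Finally, a smaller but real issue: Theorem~\ref{magicTDC} as stated does not assert finite adhesion of $(T_k,\cV_k)$, whereas your counting argument needs every edge at $\omega$'s location (or along its corresponding ray of $T_k$) to induce a finite-order separation; the paper only ever uses edges whose induced separations efficiently distinguish ends, which are finite-order by definition.
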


\noindent This theorem  
clearly implies Theorem~\ref{introTheoremTwo}:

\begin{proof}[{Proof of Theorem~\ref{introTheoremTwo}}]
Combine Theorem~\ref{main:FirstDualStars} with Theorem~\ref{thm: equiv. first countable} (i)$\Leftrightarrow$(ii).
\end{proof}

\begin{proof}[{Proof of Theorem~\ref{main:FirstDualStars}}] 
(ii)$\Rightarrow$(i)
Suppose for a contradiction that (ii) and $\neg$(i) hold. Let \mbox{$\{(P^i,\{\omega,\omega^i\})\colon i\in I\}$} be an uncountable sun in $G$ centred at $\omega$, and let $\{\, \Omega(X_n,\omega) \colon n\in\N\,\}$ be a countable open neighbourhood base for $\omega$ in $\Omega(G)$.
As all the vertex sets $X_n$ are finite and the $P^i$ are pairwise disjoint, there is an index $i \in I$ such that $P^i$ misses all of the vertex sets $X_n$. Hence, the leaf $\omega^i$ is contained in all of the neighbourhoods $\Omega(X_n, \omega)$ contradicting that $\Omega(G)$ is Hausdorff and that $\{\, \Omega(X_n,\omega) \colon n\in\N\,\}$ is a neighbourhood base for $\omega\in\Omega(G)$.

(i)$\Rightarrow$(ii) Suppose that the end space $\Omega(G)$ is not first countable at~$\omega$.
Our aim is to construct an uncountable sun in $G$ centred at $\omega$. 
Let $\Delta$ be the set of vertices that dominate $\omega$. 
By Zorn's lemma there is an inclusionwise maximal set $\mathcal{R}$ of pairwise disjoint rays all belonging to $\omega$.
Denote by $V[\mathcal{R}]$ the union of all the vertex sets of the rays contained in $\mathcal{R}$, and let $A:= \Delta \cup V[\mathcal{R}]$. 
Then $\Abs{A}=\{\omega\}$.
By Zorn's lemma there is an inclusionwise maximal set $\mathcal{P}$ of pairwise disjoint rays all starting at $A$ and belonging to  ends of $G$ other than $\omega$.

First, note  that $\mathcal{P}$ yields the desired sun if $\mathcal{P}$ is uncountable: 
Since $\Abs{A}=\{\omega\}$ only finitely many rays in $\mathcal{P}$ belong to the same end and every ray in $\mathcal{P}$ has a tail avoiding~$A$. 
Hence, if $\mathcal{P}$ is uncountable, we pass to an uncountable subset $\mathcal{P}^\prime \subseteq \mathcal{P}$ such that  all rays in $\mathcal{P}^\prime$ belong to pairwise distinct ends and have precisely their first vertex in~$A$. 
If uncountably many  rays in $\mathcal{P}^\prime$ start at $\Delta$ we are done. 
So we may assume that uncountably many rays in $\mathcal{P}^\prime$ start at $V[\mathcal{R}]$. 
As only countably many rays in $\mathcal{P}^\prime$ start at the same ray in $\mathcal{R}$, we may pass to an uncountable subset $\mathcal{P}^{\prime \prime} \subseteq \mathcal{P}^\prime$ such that all rays in $\mathcal{P}^{\prime \prime}$ start at distinct rays in~$\mathcal{R}$. Extending every ray in $\mathcal{P}^{\prime \prime}$ by a tail of the unique ray in $\mathcal{R}$ it hits yields again the desired uncountable~sun.

Therefore, we may assume that $\mathcal{P}$ is countable. In the remainder of this proof, we show that this is impossible: we deduce that then there is a countable neighbourhood base for $\omega$ in $\Omega(G) $, contradicting our assumption.

We claim that if $\mathcal{P}$ is finite, then $\omega$ is an isolated point in $\Omega(G)$, that is, there is a finite vertex set separating $\omega$ from all other ends of $G$ simultaneously. Let $\omega_0, \ldots ,\omega_n$ be the ends of the rays in $\mathcal{P}$. As these are only finitely many, there is a finite vertex set $X \subseteq V(G)$ separating $\omega$ from all of the $\omega_i$ simultaneously.  
Now, $C(X, \omega)$ contains only finitely many vertices of the rays in $\mathcal{P}$; by possibly extending $X$ we may assume that $C(X,\omega)$ contains no vertex from any ray in $\mathcal{P}$. 
Then no end of $G$ other than $\omega$ lies in $\Omega(X,\omega)$, because any such end has a ray in $C(X,\omega)$
 
starting at $A$ and avoiding all rays in $\cP$, contradicting the maximality of $\mathcal{P}$.

Thus, $\mathcal{P}$ must be countably infinite. Then the vertex set $V[\mathcal{P}] = \bigcup_{R \in \cP} V(R)$ is countable as well. 
By Lemma~\ref{lemma: JungCountableSet} there is a normal tree $T\subset G$ that contains $V[\mathcal{P}]$ cofinally. 
Moreover, as $\mathcal{P}$ is infinite we have $\omega \in \Abs{T}$, and so there is a normal ray $R_\omega\subset T$ belonging to $\omega$ by Lemma~\ref{lemma: NormalTreeNormalRay}.

We claim that for any end $\eta \neq \omega$ of $G$ there is a finite initial segment of $R_\omega$ separating $\omega$ from $\eta$ in~$G$.
This suffices to derive the desired contradiction, because then Lemma~\ref{directionalTopological} shows that the finite initial segments of $R_\omega$ define a countable open neighbourhood base for $\omega$ in~$\Omega(G)$.

First, suppose $\eta \in \Abs{T}$. Then $\eta$ has a normal ray $R_\eta$ in $T$ by Lemma~\ref{lemma: NormalTreeNormalRay}. 
As $T$ is normal in $G$, the initial segment $R_\omega \cap R_\eta$ of $R_\omega$ separates $\omega$ from $\eta$ in $G$.

Second, suppose $\eta \not\in \Abs{T}$. Then there is a unique component $C$ of $G-T$ that contains a tail of every ray in $\eta$. 
The neighbourhood $N(C)$ of $C$ in $T$ is a chain. 
If~the neighbourhood $N(C)$ of $C$ is not cofinal in~$R_\omega$, then any finite initial segment of $R_\omega$ containing $N(C) \cap R_\omega$  separates $\omega$ from $\eta$ in $G$. So suppose that $N(C)$ is cofinal in $R_\omega$ and denote by $U$ the set of all the vertices in $C$ having a neighbour in $R_\omega$. 

If some vertex $u\in U$ sends infinitely many edges to $T$, then $u$ dominates $\omega$; in particular, there is a generalised path $(P,\{\eta,\omega\})$ in $G$ where $P$ is a ray in $\eta$ that is contained in $C$ and starts at $u\in\Delta\subset A$, contradicting the maximality of~$\mathcal{P}$.
Therefore, we may assume that every vertex in $U$ sends only finitely many edges to~$T$; in particular, $U$ is infinite.
Thus, we find an independent set $M$ of infinitely many $U$--$T$ edges in $G$; we denote by $U'$ the set of endvertices that these edges have in~$U$. Then we apply the star-comb lemma~(\ref{lemma: star-comb}) to $U'$ in~$C$. 
If this yields a star \at $U'$, then the centre of the star dominates $\omega$ and we obtain the same contradiction as above. 
Otherwise this yields a comb \at $U'$. 
Then its spine, $R$ say, is a ray belonging to $\omega$. 
Thus, by the maximality of $\mathcal{R}$ there is a vertex of $V[\mathcal{R}]$ on $R$ and in particular in $C$. Consequently, there is a ray in  $\eta$ that is contained in $C$ and starts at $V[\mathcal{R}]\subset A$, contradicting the maximality of~$\mathcal{P}$.
This completes the proof of (i)$\Rightarrow$(ii).

Finally, we prove the `moreover' part.
Let $S=\{\,(P^i,\{\omega,\omega^i\})\colon i\in I\,\}$ be any uncountable \sun\ in $G$ with centre $\omega$.
We say that an oriented finite-order separation $(A,B)$ of $G$ is $S$-\emph{separating} if $(A,B)$ separates the centre  of $S$ from some leaf $\omega^i$ of $S$ in that $C(A\cap B,\omega)\subset G[ B \setminus A ]$ while \hbox{$C(A\cap B,\omega^i)\subset G[A \setminus B ]$.}
Oriented separations of the form $\lsep{C}{N(C)}$ with $C=C(X,\omega')$ for some finite vertex set $X\in\cX$ and an end $\omega'$ of $G$ are called \emph{golden}.
A star $\sigma$ of finite-order separations is \emph{golden} if every separation in $\sigma$ is golden.

Consider the set $\Sigma$ of all the golden stars that are formed by $S$-separating separations of~$G$, partially ordered by inclusion, and apply Zorn's lemma to $(\Sigma,\subset)$ to obtain a maximal element $\sigma\in \Sigma$.
We claim that $\sigma$ must be uncountable, and assume for a contradiction that $\sigma$ is countable.
Let us write $U$ for the union of the separators of the separations in $\sigma$.
As $U$ is countable, some path $P^j$ avoids $U$.
We consider the two cases that the end $\omega^j$ lies in the closure of $U$ or not.

First, suppose that the end $\omega^j$ does not lie in the closure of $U$. It is straightforward to find an $S$-separating golden separation $\lsep{C}{X}$ with $\omega^j$ living in $C$ and $C$ avoiding $U$.
Note that $\lsep{C}{X}$ is not contained in $\sigma$.
We claim that $\sigma':=\sigma \cup \{\,\lsep{C}{X}\,\}$ is again a star contained in~$\Sigma$.
Since all the elements of $\sigma'$ are $S$-separating and golden, it remains to show that the separations in $\sigma'$ indeed form a star.
As $\sigma\subset\sigma'$ already is a star, it suffices to show $\lsep{C}{X}\le\rsep{Y}{D}$ for all separations $\lsep{D}{Y}\in\sigma$.
For this, let any separation $\rsep{D}{Y}\in\sigma$ be given.
To establish $\lsep{C}{X}\le\rsep{Y}{D}$ it suffices to show that $C$ avoids $Y\cup D$, because $X$ is the neighbourhood of $C$ and $Y$ is the neighbourhood of $D$.
The component $C$ avoids $Y$ because it avoids $U$ which contains~$Y$ as a subset. Therefore, the component $C$ is contained in some component of $G-Y$. 
Now suppose for a contradiction that $C$ and $D$ meet.
Then $C$ must be contained in $D$.
Since $P^j$ avoids $Y$ and contains a ray that lies $C$, we deduce $P^j\subset D$.
But then $\omega$ must live in $D$, contradicting that $(D,Y)$ is $S$-separating.
Thus, $\sigma'$ is again an element of $\Sigma$, contradicting the maximal choice of $\sigma$.

Second, suppose that the end $\omega^j$ lies in the closure of $U$.
We show that this implies $\omega^j=\omega$, a contradiction.
For this, let any finite vertex set $X\subset V(G)$ be given; we show $C(X,\omega^j)=C(X,\omega)$.
To get started, we observe that all but finitely many of the paths $P^i$ avoid $X$.
Also, all but finitely many of the separations $\lsep{D}{Y}\in\sigma$ have their component $D$ avoid $X$ (the components are disjoint for distinct separations in $\sigma$ because $\sigma$ is a star of separations).
As $U$ meets $C(X,\omega^j)$ infinitely, this allows us to find a separation $\lsep{C(Y,\omega^i)}{Y}\in\sigma$ that has its separator $Y$ meet the component $C(X,\omega^j)$ while both $C(Y,\omega^i)$ and $P^i$ avoid the finite separator $X$.
To show $C(X,\omega^j)=C(X,\omega)$, it suffices to find a $P^i$--$P^j$ path in $G$ that avoids~$X$.
We find such a path as follows:
In $C(Y,\omega^i)$ we find a path from $P^i$ to a vertex that sends an edge to a vertex $v$ in the non-empty intersection $Y\cap C(X,\omega^j)$.
And in $C(X,\omega^j)$ we find a path from $P^j$ to $v$.
Then both paths avoid $X$, and their union contains the desired $P^i$--$P^j$ path avoiding $X$.
This concludes the proof that $\sigma$ is uncountable.

Now $\sigma$ is an uncountable star of separations~$\lsep{C}{N(C)}$ such that some leaf $\omega^{i(C)}$ of the sun $S$ lives in $C$ and $\omega$ does not live in~$C$.
Then $J:=\{\,i(C)\colon \lsep{C}{N(C)}\in\sigma\,\}$ is an uncountable subset of~$I$.
In particular, the uncountable sun \[\{\,(P^j,\{\omega,\omega^j\})\colon j\in J\,\}\] and the star-decomposition arising from $\sigma$ are as desired.
\end{proof}

For the interested reader we remark that, even though end spaces of graphs are in general not first countable, it is straightforward to show that every end space is strong Fréchet–Urysohn (which is a generalisation of the first axiom of countability):
A topological space $X$ is called a \emph{strong Fréchet–Urysohn space} if for any sequence of subsets $A_0, A_1, \ldots$ of $X$ and every $x \in \bigcap_{n \in \mathbb{N}} \overline{A_n}$ there is a sequence of points $x_0,x_1,\ldots$ converging to $x$ such that $x_n \in A_n$ for all $n\in\N$.

\begin{lemma}
End spaces of graphs are strong Fréchet–Urysohn.\qed
\end{lemma}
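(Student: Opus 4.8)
The plan is to deduce the property from a single auxiliary fact: inside a suitable \emph{countable} normal tree the relevant end is first countable, and there the statement is routine. So, given subsets $A_0,A_1,\dots\subset\Omega(G)$ and an end $\omega\in\bigcap_{n}\overline{A_n}$, I would build one countable normal tree $T\subset G$ with $\omega\in\Abs T$ that ``sees'' every $A_n$ arbitrarily close to $\omega$, and then read the desired sequence off the first-countable-at-$\omega$ subspace $\Abs T$.

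First I would fix a ray $R_0\in\omega$ and run the normal-tree construction behind Lemma~\ref{lemma: JungCountableSet}, interleaving it with the following bookkeeping. Along the construction $\omega$ acquires a normal ray $R_\omega=p_0p_1\cdots\subset T$; at the stage where $p_j$ is added I would use $\omega\in\overline{A_n}$ (for each $n\le j$) to pick an end $\eta_{n,j}\in A_n\cap\Omega(\dc{p_j},\omega)$ together with a ray $S_{n,j}\in\eta_{n,j}$ inside $C(\dc{p_j},\omega)$, and require $T$ to contain $V(S_{n,j})$ cofinally. Only countably many rays are added, so $T$ stays countable and normal in $G$, and each $\eta_{n,j}$ lies in $\Abs T$ because $S_{n,j}\subset T$. (If $\omega\in A_n$ for some $n$, one simply sets $\omega_n=\omega$.)

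The heart of the matter is the claim that the finite sets $N_j:=\Omega(\dc{p_j},\omega)\cap\Abs T$ form a decreasing neighbourhood basis of $\omega$ in $\Abs T$. That $\bigcap_jN_j=\{\omega\}$ is immediate from Lemma~\ref{lemma: NormalTreeNormalRay}: an end lying in every $N_j$ has a normal ray through every $p_{j+1}$, hence equal to $R_\omega$. For the basis property I would argue by contradiction. If some $\Omega(Z,\omega)$ with $Z\in\cX$ contains no $N_j$, then for infinitely many $j$ there is $\eta_j\in N_j\setminus\Omega(Z,\omega)$. Since $\eta_j\in\Omega(\dc{p_j},\omega)$, its normal ray leaves $R_\omega$ only at some $p_{m_j}$ with $m_j\ge j+1$, into a child $c_j\neq p_{m_j+1}$, so by Lemma~\ref{lemma: NTseparationAndComponents} $\eta_j$ lives in the component $D_j$ of $G-\dc{p_{m_j}}$ spanned by $\guc{c_j}$, and the $D_j$ are pairwise disjoint. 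As $\eta_j\notin\Omega(Z,\omega)$, after passing to a subsequence all $\eta_j$ lie in one component $C'\neq C(Z,\omega)$ of $G-Z$, and since the $D_j$ are disjoint all but finitely many avoid the finite set $Z$, whence $D_j\subset C'$. But then the tree edge $p_{m_j}c_j$ runs from $p_{m_j}\in C(Z,\omega)$ (as $m_j\to\infty$ and $Z$ is finite) to $c_j\in D_j\subset C'$; an edge cannot join two different components of $G-Z$, so $c_j\in Z$ — impossible for infinitely many distinct $c_j$. Thus $\Abs T$ is first countable at $\omega$.

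The conclusion is then immediate: by construction $\eta_{n,j}\in A_n\cap N_j$, so putting $\omega_n:=\eta_{n,n}$ gives $\omega_n\in A_n$ with $\omega_n\in N_n$; as the $N_j$ form a decreasing basis, $\omega_n\to\omega$ in $\Abs T$, and since $\omega\in\Abs T$ this is the same as $\omega_n\to\omega$ in $\Omega(G)$. The main obstacle is the interleaved construction in the second paragraph — extending $\omega$'s branch cofinally while attaching, above each $p_j$ and inside $C(\dc{p_j},\omega)$, a witness ray from each $A_n$ without breaking normality. Equivalently one may build $T_0\supset R_0$ first, read off $R_\omega$ and the sets $\dc{p_j}$, and then extend $T_0$ down-closedly so that $R_\omega$ survives as $\omega$'s normal ray; the edge-cannot-jump-a-separator argument above is exactly what guarantees that the $\dc{p_j}$ still give a neighbourhood basis in the enlarged tree.
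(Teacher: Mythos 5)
Your overall skeleton is sound (pass to a countable normal tree containing witnesses, show that the traces $N_j=\Omega(\dc{p_j},\omega)\cap\Abs T$ form a base at $\omega$ in $\Abs T$, then read off a convergent sequence), and the claim about the $N_j$ is in fact true. But the proof has a genuine gap exactly at the point you yourself flag as ``the main obstacle'': the construction of $T$. The normal ray $R_\omega$ and the sets $\dc{p_j}$ only exist once $T$ does, so the witnesses $\eta_{n,j}\in A_n\cap\Omega(\dc{p_j},\omega)$ cannot be chosen before $T$ is built, and neither of your two resolutions works as stated. The fallback you call ``equivalent'' --- build $T_0\supset R_0$ first, read off $R_\omega$, then extend $T_0$ keeping it as a rooted subtree (which is what preserves the down-closures $\dc{p_j}$) --- is provably impossible in general: a normal tree cannot always be extended into a component of $G-T_0$ with infinite neighbourhood, and all the witness ends may live in such a component. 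Concretely, let $G$ consist of two rays $R=r_0r_1\cdots$ and $W=w_0w_1\cdots$ joined by all edges $r_iw_i$, plus a ray $Q_i$ attached at each $w_i$; let $\omega$ be the end of $R$ and $A_n=\{\eta_i\colon i\in\N\}$, where $\eta_i$ is the end of $Q_i$. Then $T_0=R$ is a legitimate output of Lemma~\ref{lemma: JungCountableSet} for $U=V(R_0)$, with $R_\omega=R$, but no normal tree rooted at $r_0$ containing $R$ contains even one vertex of the component $C=W\cup\bigcup_iQ_i$: if $w_m\in T$ were the first vertex of $C$ reached, it would be a child of $r_m$, and then either the edge $r_{m+1}w_{m+1}$ (if $w_{m+1}\in T$) or the $T$-path $r_{m+1}w_{m+1}w_m$ (if $w_{m+1}\notin T$) has incomparable endvertices. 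So stage two of that plan cannot even begin, although every witness end lives in $C$. The interleaved version can be made to work, but only with an extra idea you never supply: a stage-by-stage certificate that a vertex $p$ will lie on the normal ray of $\omega$ in the \emph{final} tree --- for instance the tree-independent condition that $\omega$ lives in the component of $G-(\dc{p}\setminus\{p\})$ containing $p$, maintained while attaching the witness rays. Without such an invariant, the vertices you call $p_j$ at finite stages need not end up on $R_\omega$, and then the sets $\Omega(\dc{p_j},\omega)$ in which you chose your witnesses are not the sets $N_j$ your convergence argument uses.

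There are also two flaws inside your proof of the base property, both repairable. First, ``after passing to a subsequence all $\eta_j$ lie in one component $C'\neq C(Z,\omega)$ of $G-Z$'' is an invalid pigeonhole: $G-Z$ may have infinitely many components, each containing only finitely many of the $\eta_j$. This detour is also unnecessary: argue per $j$ with $C(Z,\eta_j)$ in place of $C'$. Second, ``the $D_j$ are pairwise disjoint'' is unjustified, because distinct $\eta_j$ may branch off $R_\omega$ at the \emph{same} child $c_j$, giving $D_j=D_{j'}$. This is fixable by observing that the map $j\mapsto c_j$ is finite-to-one (the level of $c_j$ is $m_j+1\ge j+2$), so after passing to a subsequence the $c_j$ are pairwise distinct, hence pairwise incomparable, and the sets $\uc{c_j}$ are pairwise disjoint; since the normal ray of $\eta_j$ runs through $p_{m_j}\in C(Z,\omega)$ and ends up with a tail in $C(Z,\eta_j)\ne C(Z,\omega)$, it must meet $Z$ inside $\uc{c_j}$, and $Z$ cannot meet infinitely many disjoint sets. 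With these repairs, and with an honest construction of $T$, your argument would give a correct proof; as written, both the foundation (the tree) and the pigeonhole step fail.
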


\section{Countably determined graphs and the second axiom\texorpdfstring{\\}{ }of countability}\label{section: second countable}

\noindent In this section we structurally characterise both the graphs that are countably determined and the graphs that are not countably determined.

Clearly, a graph $G$ is countably determined if and only if every component of $G$ is countably determined and only countably many components of $G$ have directions.
Similarly, the end space of a graph $G$ is second countable if and only if every component of $G$ has a second countable end space and only countably many components of $G$ have ends.
Thus, to structurally characterise the countably determined graphs and the graphs that are not countably determined, and to link this to whether or not the end space is second countable, it suffices to consider only connected graphs.

The local property that every direction of $G$ is countably determined in $G$ does not imply the stronger global property that $G$ is countably determined:

\begin{example}\label{countablyDeterminedLocalNotGlobal}
There exists a connected graph $G$ all whose directions are countably determined in it but which is itself not countably determined.
The graph $G$ can be chosen so that its end space is compact and first countable at every end, but neither metrisable nor second countable nor separable.
\end{example}

Recall that a topological space is called \emph{separable} if it admits a countable dense subset.
Every second countable space is separable, but the converse is generally false.
For end spaces, however, we shall see in Theorem~\ref{EquivalentForSecondcount} that the converse is true: the end space of any graph is second countable if and only if it is separable.

\begin{proof}[{Proof of Example~\ref{countablyDeterminedLocalNotGlobal}}]
Let $T_2$ be the rooted infinite binary tree.
The graph $G$ arises from $T_2$ by disjointly adding a new ray $R'$ for every rooted ray $R\subset T_2$ such that $R'_1$ and $R_2'$ are disjoint for distinct rooted rays $R_1,R_2\subset T_2$, and joining the first vertex $v_{R'}$ of each $R'$ to all the vertices of $R$.
Then for every rooted ray $R\subset T_2$ the two rays $R'$ and $v_{R'}R$ are directional in $G$.
Since every direction of $G$ is induced by precisely one of these directional rays, all the directions of $G$ are countably determined.

The graph $G$, however, is not countably determined:
If $\{\,(X_n,C_n)\colon n\in\N\,\}$ is any countable collection of directional choices in $G$, then there is a rooted ray $R\subset T_2$ such that $R'$ avoids all $X_n$ (because $T_2$ contains continuum many rooted rays and $\bigcup_n X_n$ is countable).
But then, for all $n\in\N$, the subgraph $G-X_n$ contains a double ray formed by $R'$ and a subray of $R$ avoiding $X_n$ (that is connected to $R'$ by one of the infinitely many $v_{R'}$--$R$ edges).
These double rays then witness that none of the directional choices $(X_n,C_n)$ distinguishes the direction induced by $R$ from the direction induced by $R'$ or vice versa.

The end space of $G$ is first countable at every end because every direction of $G$ is countably determined (Theorem~\ref{thm: equiv. first countable}).
It is compact because the deletion of any finite set of vertices of $G$ leaves only finitely many components, cf.~\cite[Theorem~4.1]{VTopComp} or~\cite[Lemma~2.8]{StarComb1StarsAndCombs}.
However, the end space of $G$ is not separable, because every dense subset of $\Omega(G)$ must contain all the continuum many ends represented by the rays~$R'$.
Thus, it its neither second countable nor metrizable.
\end{proof}

This is essentially a combinatorially constructed version of the Alexandroff double circle~\cite[Example~3.1.26]{EngelkingBook}.

Now we structurally characterise the countably determined graphs and the graphs that are not countably determined, and structurally characterise the graphs whose end spaces are second countable or not.
Our introduction suggests that this is the order in which we prove these results, but we will prove them in a different order:
First, we shall structurally characterise the graphs whose end spaces are second countable or not.
Then, we shall prove that the end space of a graph is second countable if and only if the graph is countably determined.
Finally, we shall use this equivalence to immediately obtain structural characterisations of the countably determined graphs and the graphs that are not countably determined.
Here, then is our structural characterisation of the graphs whose end spaces are second countable:

\begin{theorem}\label{EquivalentForSecondcount}
For every connected graph $G$ the following assertions are equivalent:
\begin{enumerate}
    \item The end space of $G$ is second countable.
    \item The end space of $G$ is separable.
    \item There is a countable end-faithful normal tree $T\subset G$.
    \item The end space of $G$ has a countable base that consists of basic open sets.
\end{enumerate}
\end{theorem}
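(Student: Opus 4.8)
The plan is to prove the four equivalences by establishing the cycle of implications (iii)$\Rightarrow$(iv)$\Rightarrow$(i)$\Rightarrow$(ii)$\Rightarrow$(iii), exploiting the fact that normal trees reflect the end structure of $G$ via Lemmas~\ref{lemma: NormalTreeNormalRay} and~\ref{lemma: NTseparationAndComponents}. The implications (iv)$\Rightarrow$(i) and (i)$\Rightarrow$(ii) are soft topological facts: a countable base consisting of basic open sets is in particular a countable base, giving second countability; and second countability always implies separability by picking one point from each nonempty basic open set. So the real content lives in the two implications that connect the combinatorial object (a countable end-faithful normal tree) with the topological ones.

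For (iii)$\Rightarrow$(iv), I would start from a countable end-faithful normal tree $T\subset G$ and produce an explicit countable base of basic open sets for $\Omega(G)$. Since $T$ is end-faithful, Lemma~\ref{lemma: NormalTreeNormalRay} gives a bijection between $\Omega(G)$ and the normal rays of $T$, and every end lies in the closure of $T$. The natural candidate for the base is the countable family $\{\,\Omega(\dc{t},\omega)\colon t\in V(T),\ \omega\in\Abs{\uc{t}}\,\}$, i.e.\ the basic open sets determined by down-closures $\dc{t}$ of vertices of $T$ together with the components spanned by the generalised up-closures $\guc{t}$ (this family is countable because $T$ is). The key step is to show this is actually a neighbourhood base: given any end $\omega$ and any basic open neighbourhood $\Omega(X,\omega)$, I would use that $\omega$ contains a normal ray $R_\omega\subset T$ and argue, exactly as in the proof of Theorem~\ref{thm: equiv. first countable}~(ii)$\Rightarrow$(iii), that for a vertex $v$ on $R_\omega$ lying above $\dc X$ (in the tree-order) the set $C(\dc v,\omega)\subset C(X,\omega)$ by Lemma~\ref{lemma: NTseparationAndComponents}, so that $\Omega(\dc v,\omega)\subset\Omega(X,\omega)$. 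Doing this uniformly over all ends yields that the countable family is a base.

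For the harder direction (ii)$\Rightarrow$(iii), I would start from a countable dense set $D\subset\Omega(G)$ and build a countable end-faithful normal tree. For each end $\delta\in D$ fix a ray $R_\delta\in\delta$, and let $U$ be the countable union of the vertex sets $V(R_\delta)$. By Jung's Lemma~\ref{lemma: JungCountableSet} there is a normal tree $T\subset G$ containing $U$ cofinally, and $T$ is countable since $U$ is. Normal trees are always end-injective, so it remains to prove $T$ is end-surjective, equivalently $\Abs T=\Omega(G)$. The crucial claim is that density of $D$ forces every end of $G$ to lie in the closure of $T$: I would argue by contradiction, taking an end $\omega\notin\Abs T$, so that some component $C$ of $G-T$ captures $\omega$ while $T$ is not cofinally met; then a suitable basic open set $\Omega(X,\omega)$ isolates $\omega$ from $T$, hence from all of $U\supset\bigcup_{\delta\in D}V(R_\delta)$, contradicting that $D$ is dense (no point of $D$ could lie in this neighbourhood of $\omega$).

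The step I expect to be the main obstacle is precisely this last argument that density of $D$ yields $\Abs T=\Omega(G)$, since it requires translating topological density of the chosen ends into the combinatorial statement that their fixed rays, and hence the tree $T$ built cofinally over them, meet $C(X,\omega)$ for \emph{every} finite $X$ and every end $\omega$. The delicate point is that a dense set need not contain $\omega$ itself, so I must ensure that every basic open neighbourhood $\Omega(X,\omega)$ contains some $\delta\in D$ whose chosen ray $R_\delta$ actually enters $C(X,\omega)$ and thereby forces $T$ to meet that component; this is where the cofinal containment of $U$ in $T$, together with the fact that a ray witnessing $\delta\in\Omega(X,\omega)$ has a tail in $C(X,\omega)$, must be combined carefully to conclude $C(X,\omega)\cap V(T)\neq\emptyset$ for all $X$, which is exactly the definition of $\omega\in\Abs T$.
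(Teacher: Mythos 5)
Your cycle of implications and most of its content coincide with the paper's proof: (i)$\Rightarrow$(ii) and (iv)$\Rightarrow$(i) are the same soft facts, and your (ii)$\Rightarrow$(iii) --- fix rays for a countable dense set, apply Lemma~\ref{lemma: JungCountableSet} to get a countable normal tree $T$ containing their union cofinally, and derive $\Abs{T}=\Omega(G)$ from density (equivalently, from the closedness of $\Abs{T}$ in $\Omega(G)$) --- is exactly the paper's argument and is correct.

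The gap is in (iii)$\Rightarrow$(iv), in the step you yourself describe as routine. You cannot argue ``exactly as in the proof of Theorem~\ref{thm: equiv. first countable}~(ii)$\Rightarrow$(iii)'': in that proof the relevant finite sets $X_n$ were built into the normal tree (Jung's lemma was applied to a set $U$ containing them), so their down-closures in $T$ made sense and Lemma~\ref{lemma: NTseparationAndComponents} applied. Here $T$ is given in advance and $X$ is an \emph{arbitrary} finite subset of $V(G)$; in general $X\not\subset V(T)$, so $\dc{X}$ is undefined, and the containment $C(\dc{v},\omega)\subset C(X,\omega)$ you invoke is genuinely false, not merely unproved. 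Concretely, let $G$ consist of a ray $R$ together with one further vertex $x$ joined to all vertices of $R$, and let $T=R$ rooted at its first vertex: this is a countable end-faithful normal tree in $G$, yet for $X=\{x\}$ every component $C(\dc{v},\omega)$ with $v\in R$ contains $x$ and hence is not contained in $C(X,\omega)$. What is true --- and what you actually need --- is only the weaker containment $\Omega(\dc{v},\omega)\subset\Omega(X,\omega)$ for some $v$, and this requires an argument that copes with vertices of $X$ lying outside $T$, for instance vertices that dominate $\omega$ from a component of $G-T$ whose neighbourhood is cofinal in $R_\omega$. The paper's route is: use end-faithfulness and normality to see that every end $\eta\neq\omega$ is separated from $\omega$ by the finite initial segment $R_\eta\cap R_\omega$, conclude that $R_\omega$ is \emph{directional} in $G$, and then invoke Lemma~\ref{directionalTopological}~(i)$\Rightarrow$(ii) (directional implies topological); the nontrivial recursive construction of disjoint rays inside that lemma is precisely what your shortcut skips. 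With that substitution your proof closes; as written, (iii)$\Rightarrow$(iv) is not established.
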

\begin{proof}

(i)$\Rightarrow$(ii) Every second countable space is separable.

(ii)$\Rightarrow$(iii) Let $\Psi \subseteq \Omega(G) $ be any countable and dense subset. 
Pick a ray $R_\omega \in \omega$ for every end $\omega \in \Psi$ and let $U:= \bigcup\,\{\, V(R_\omega)\colon\omega\in\Psi\,\}$. 
By Lemma~\ref{lemma: JungCountableSet} there is a countable normal tree $T \subseteq G$ that contains $U$ cofinally. 
We have to show that $T$ is end-faithful. 
As mentioned in Section~\ref{setion: preliminaries}, normal trees are always end-injective. 
To show that $T$ is end-surjective note that $\Abs{T}$ is closed in $\Omega (G)$. 
Hence we have $\Omega(G) = \overline{\Psi} \subseteq \overline{ \Abs{T} }  = \Abs{T}$. So by Lemma~\ref{lemma: NormalTreeNormalRay} the normal tree
$T$ contains a normal ray of every end of $G$.

(iii)$\Rightarrow$(iv)
Let $T\subset G$ be any countable end-faithful normal tree.
We claim that the collection $\mathcal{B} := \{\, \Omega(\lceil t \rceil, \omega)  \colon t\in T,\, \omega \in \Omega \,\}$ is a countable base of the topology on $\Omega (G)$. Note first that $\mathcal{B}$ is indeed countable: 
Since $T$ is end-faithful and countable, the deletion of finitely many vertices of $T$ from $G$ results in only countably many components containing an end. Consequently, for every $t \in T$ there are only countably many distinct sets of the form $\Omega(\lceil t \rceil, \omega)$.

Now, given a basic open set of $\Omega (G)$, say $\Omega(X,\omega)$, our goal is to find a vertex $t \in T$ such that $\Omega (\lceil t \rceil, \omega ) \subseteq \Omega(X,\omega)$. 
By Lemma~\ref{lemma: NormalTreeNormalRay}, every end $\eta$ of $G$ in the closure of $T$ contains a normal ray $R_\eta\subset T$.  
By the normality of $T$ and Lemma~\ref{lemma: NTseparationAndComponents}, every end $\eta\neq\omega$ of $G$ is separated from $\omega$ in $G$ by the finite initial segment $R_\eta\cap R_\omega$ of~$R_\omega$.
In particular, $R_\omega$ is directional in $G$.
Hence, by the implication (i)$\Rightarrow$(ii) of Lemma~\ref{directionalTopological} the ray $R_\omega$ is topological. 
Thus, there is a vertex $t \in R_\omega$ such that $\Omega( \lceil t \rceil ,\omega ) \subseteq \Omega(X,\omega)$ holds.

(iv)$\Rightarrow$(i) This is clear.
\end{proof}

Next, we structurally characterise the graphs whose end spaces are not second countable.
The characterising structure is not the star-decomposition in Theorem~\ref{introTheoremFour} that one would expect; that is because this result is an auxiliary result that we will use in a second step to prove a second structural characterisation, Theorem~\ref{main:SecondDualGoldenStar}, which then is phrased in terms of the desired star-decomposition.


\begin{theorem}\label{main:SecondDualThree}
For every connected graph $G$ the following assertions are equivalent:
\begin{enumerate}
    \item{The end space of $G$ is not second countable.}
    \item{The graph $G$ contains either 
        \begin{itemize}[label=\textbf{--}]
        \item{an uncountable sun, }
        \item{uncountably many disjoint generalised paths, or}
        \item{a finite vertex set that separates uncountably many ends of $G$ simultaneously.}
    \end{itemize} }
\end{enumerate}
\end{theorem}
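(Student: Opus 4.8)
The plan is to prove the easy direction $(ii)\Rightarrow(i)$ first and then the substantial direction $(i)\Rightarrow(ii)$. For $(ii)\Rightarrow(i)$ I would treat each of the three witnessing structures separately and show that each one forces the end space to fail second countability. If there is a finite vertex set $X$ separating uncountably many ends simultaneously, then the uncountably many basic sets $\Omega(X,C)$ for components $C$ of $G-X$ are pairwise disjoint nonempty open sets, so no countable base can exist. If there are uncountably many disjoint generalised paths, their endpoints give uncountably many ends that are ``spread out'' in the sense that any countable family of basic open sets must miss the path associated with some endpoint (since the separators of countably many basic sets form a countable, hence dispersed-enough, vertex set that some disjoint path avoids); this yields an end with no small basic neighbourhood from the proposed base. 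The uncountable-sun case is the most delicate of the three but is essentially covered by Theorem~\ref{main:FirstDualStars}~$(ii)\Rightarrow(i)$: an uncountable sun centred at $\omega$ makes $\Omega(G)$ not first countable at $\omega$, and a space that fails first countability at a point cannot be second countable.

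For the hard direction $(i)\Rightarrow(ii)$ I would argue by contraposition: assuming none of the three structures is present, I build a countable end-faithful normal tree and invoke Theorem~\ref{EquivalentForSecondcount}~$(iii)\Rightarrow(i)$ to conclude second countability. The strategy is to construct a countable dense subset $\Psi\subset\Omega(G)$ and then apply Theorem~\ref{EquivalentForSecondcount}~$(ii)\Rightarrow(i)$ directly, which is cleaner than building the tree by hand. So the real content is: \emph{if $G$ has no uncountable sun, no uncountably many disjoint generalised paths, and no finite vertex set separating uncountably many ends, then $\Omega(G)$ is separable.} I would proceed by a maximality/exhaustion argument reminiscent of the proof of Theorem~\ref{main:FirstDualStars}: iteratively select ends together with disjoint rays or double rays representing them, using Zorn's lemma to obtain a maximal family $\mathcal{Q}$ of pairwise disjoint generalised paths whose endpoints are pairwise distinct. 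By hypothesis $\mathcal{Q}$ is countable, so the vertex set it occupies is countable; let $\Psi$ be the (countable) set of all endpoints of paths in $\mathcal{Q}$. The goal is then to show $\Psi$ is dense.

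The main obstacle, as I see it, is proving density of $\Psi$: I must show that for every basic open set $\Omega(X,C)$ there is an end of $\Psi$ inside it. The natural approach is to suppose some $\Omega(X,C)$ meets $\Psi$ in nothing and derive a contradiction with maximality of $\mathcal{Q}$. If $C$ contains an end $\eta$ not already ``caught'', I want to attach $\eta$ to the existing structure by a new generalised path disjoint from everything in $\mathcal{Q}$, which would contradict maximality --- unless some obstruction forces one of the three forbidden structures. Concretely, inside $C$ I would run the star--comb lemma (Lemma~\ref{lemma: star-comb}) on a suitable attachment set to either produce a fresh ray/double ray toward $\eta$ (extending $\mathcal{Q}$, contradiction) or expose a vertex dominating many ends or a bottleneck, which I must then parlay into either an uncountable sun, or a finite separator cutting off uncountably many ends; the hypothesis that $\mathcal{Q}$ was maximal and \emph{countable} is what forces enough ends to pile up in a single small region to trigger the third alternative (a finite set separating uncountably many ends). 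Balancing these cases --- in particular ensuring that the ``leftover'' uncountably many ends are genuinely separated by one finite set rather than merely inequivalent --- is where the careful work lies, and I expect to lean on the component structure of normal trees (Lemma~\ref{lemma: NTseparationAndComponents}) and on Theorem~\ref{magicTDC} to organise the ends into finitely separated clusters.
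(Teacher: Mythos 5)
Your direction (ii)$\Rightarrow$(i) is fine and matches the paper's, which dismisses it as clear after reducing, via Theorem~\ref{EquivalentForSecondcount}, to countable bases of basic open sets. The gap is in your plan for (i)$\Rightarrow$(ii): the central claim you aim for --- that the set $\Psi$ of endpoints of a maximal family $\mathcal{Q}$ of pairwise disjoint generalised paths is dense in $\Omega(G)$ --- is false, even under all of your hypotheses. Take $G$ to be countably many rays $R_1,R_2,\dots$ sharing exactly their first vertex $v$. No vertex dominates any end, so every generalised path is a double ray, and every double ray between two ends passes through $v$; hence any family of pairwise vertex-disjoint generalised paths has at most one member, and a maximal $\mathcal{Q}$ is, say, $\{(R_1 v R_2,\{\omega_1,\omega_2\})\}$. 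Then $\Psi=\{\omega_1,\omega_2\}$, while $\Omega(G)$ is a countably infinite discrete space: $\Psi$ is closed and not dense. (A one-ended graph is an even smaller counterexample: $\mathcal{Q}=\emptyset$, so $\Psi=\emptyset$.) These graphs contain none of the three structures and their end spaces \emph{are} second countable, so they are consistent with the theorem, but they show that no argument can prove your density claim. Note also that maximality of $\mathcal{Q}$ with respect to the \emph{strong} disjointness (vertex-disjoint plus endpoint-disjoint) is a weak property: to contradict it you would need a new path that avoids $V[\mathcal{Q}]$ \emph{and} has both endpoints outside $\Psi$, which the examples above show you cannot always produce.

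The paper avoids this by never asking for density. It takes a maximal family $\mathcal{P}$ of pairwise \emph{vertex}-disjoint generalised paths; if $\mathcal{P}$ is uncountable, an auxiliary-graph argument yields either an uncountable generalised star --- whence, via Theorem~\ref{main:FirstDualStars}, an uncountable sun --- or uncountably many disjoint generalised paths. If $\mathcal{P}$ is countable, it embeds $V[\mathcal{P}]$ cofinally into a countable normal tree $T$ (Lemma~\ref{lemma: JungCountableSet}) and considers the ends \emph{not} in $\Abs{T}$. If there are only countably many such ends, one absorbs a ray from each into a larger countable normal tree, which is then end-faithful, giving second countability by Theorem~\ref{EquivalentForSecondcount}~(iii)$\Rightarrow$(i); this is exactly how the ``missed'' ends in my example are handled, and it replaces your separability route. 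If there are uncountably many, then by maximality of $\mathcal{P}$ they live in pairwise distinct components of $G-T$, a star--comb argument shows each such component has finite neighbourhood in the countable tree $T$, and the pigeonhole principle produces one finite set $X\subset V(T)$ that is the common neighbourhood of uncountably many of these components, i.e.\ a finite set separating uncountably many ends simultaneously. So the repair of your write-up is to replace ``$\Psi$ is dense'' by this dichotomy on $\Omega(G)\setminus\Abs{T}$ and to enter Theorem~\ref{EquivalentForSecondcount} through clause (iii) rather than clause (ii).
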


\begin{proof}
Recall that, by Theorem~\ref{EquivalentForSecondcount}, the end space of $G$ is second countable if and only if it has a countable base that consists of basic open sets.
Then clearly (ii)$\Rightarrow$(i). 

(i)$\Rightarrow$(ii)
For this, suppose that $G$ is given such that the end space of $G$ is not second countable. We have to find one of the three substructures for $G$ listed in~(ii). By Zorn's lemma we find an inclusionwise maximal collection $\mathcal{P}$ of pairwise vertex-disjoint generalised paths in $G$. Our proof consists of two halves.
In the first half we show that if $\mathcal{P}$ is uncountable, then we find either an uncountable sun in $G$ or uncountably many disjoint generalised paths in $G$.
In the second half we show that if $\mathcal{P}$ is countable, then we find a finite vertex set of $G$ that separates uncountably many ends of $G$ simultaneously.

First, we assume that $\mathcal{P}$ is uncountable. In this case, we consider the auxiliary multigraph that is defined on the set of ends of $G$ by declaring every generalised path $(P,\{\omega_1,\omega_2\})\in\mathcal{P}$ to be an edge between $\omega_1$ and~$\omega_2$.
Note that the auxiliary multigraph contains only finitely many parallel edges between any two vertices.
Thus, by replacing $\mathcal{P}$ with a suitable uncountable subset we may assume that the auxiliary multigraph is in fact a graph.

If that auxiliary graph has a vertex $\omega$ of uncountable degree, then its incident edges correspond to uncountably many generalised paths that form an uncountable generalised star in $G$ with centre $\omega$.
This uncountable generalised star need not be proper in general. 
However, it shows that $\omega$ has no countable neighbourhood base in $\Omega (G)$, so Theorem~\ref{main:FirstDualStars} yields an uncountable sun in $G$ with centre~$\omega$.

Otherwise, every vertex of the auxiliary graph has countable degree. Then we greedily find an uncountable independent edge set, and this edge set corresponds to an uncountable collection of disjoint generalised paths in $G$.

Second, we assume that $\mathcal{P}$ is countable. 
Then our goal is to find a finite vertex set $X\subset V(G)$ that separates uncountably many ends of $G$ simultaneously. By Lemma~\ref{lemma: JungCountableSet} there is a countable normal tree $T\subset G$ that cofinally contains the union of the vertex sets of the generalised paths in $\mathcal{P}$. 
Then $\Omega (G) \setminus \Abs{T}$ is uncountable, since otherwise applying Lemma~\ref{lemma: JungCountableSet} to the union of $V(T)$ with the vertex set of a ray from every end in $\Omega(G) \setminus \Abs{T}$ gives a countable  end-faithful normal tree in $G$, contradicting Theorem~\ref{EquivalentForSecondcount}.

Every ray from an end in $\Omega(G) \setminus \Abs{T}$ has a tail in one of the components of $G-T$ and this component is the same for any two rays in the same end. We say that an end in $\omega \in \Omega(G) \setminus \Abs{T}$ \emph{lives} in the unique component of $G-T$ in which every ray in $\omega$ has a tail. 
By the maximality of $\mathcal{P}$, distinct ends in $\Omega(G) \setminus \Abs{T}$ live in distinct components of $G-T$. As $\Omega(G) \setminus \Abs{T}$ is uncountable, we conclude that there are uncountably many components of $G-T$ in which an end of $\Omega(G) \setminus \Abs{T}$ lives; we call these components \emph{good}.

We claim that every good component of $G- T$  has finite neighbourhood. 
For this, assume for a contradiction that there is a good component $C$ of $G-T$ whose neighbourhood $N(C)\subset T$ is infinite. 
Write $\omega$ for the end in $\Omega(G) \setminus\Abs{T}$ that lives in~$C$.
The down-closure of $N(C)$ in $T$ forms a ray and we denote by $\eta$ the end in $\Abs{T}$ represented by this ray.
Consider the set $U$ of all the vertices in $C$ sending an edge to $T$. If some vertex $u\in U$ sends infinitely many edges to $T$, then $u$ dominates $\eta$; in particular, there is a generalised path $(P,\{\omega,\eta\})$ in $G$ where $P$ is a ray in $\omega$ that is contained in $C$ and starts at $u$, contradicting the maximality of $\mathcal{P}$. 
Therefore, we may assume that every vertex in $U$ sends only finitely many edges to~$T$; in particular, $U$ is infinite. Thus, we find an independent set $M$ of infinitely many $U$--$T$ edges in $G$; we denote by $U'$ the set of the endvertices that these edges have in~$U$. Applying the star-comb lemma \ref{lemma: star-comb} in $C$ to $U'$ gives either a star attached to $U'$ or a comb attached to $U'$. 
The centre of a star \at $U'$ would dominate~$\eta$, yielding the same contradiction that would be caused by a vertex in $U$ sending infinitely many edges to $T$.
Hence we obtain a comb attached to $U'$. 
The comb's spine represents $\eta$, because of the edges in $M$.
Consequently,  there is a double ray $P\subset C$ defining a generalised path $(P,\{\omega,\eta\})$ vertex-disjoint from all generalised paths in $\mathcal{P}$, contradicting the maximality of $\mathcal{P}$. This completes the proof of the claim that every good component of $G-T$ has finite neighbourhood.

Finally, as all of the uncountably many good components of $G-T$ have a finite neighbourhood in $T$ and $T$ is countable, there are uncountably many such components having the same finite neighbourhood $X\subset V(T)$. 
Then $X$ is a finite vertex set of $G$ that separates uncountably many ends of $G$ simultaneously, as desired.
\end{proof}

Next, we will prove the structural characterisation of the graphs whose end spaces are not second countable, in terms of the desired star-decomposition:

\begin{theorem}\label{main:SecondDualGoldenStar}
For every connected graph $G$ the following assertions are equivalent:
\begin{enumerate}
    \item The end space of $G$ is not second countable.
    \item $G$ has an uncountable star-decomposition of finite adhesion such that in every leaf part there lives an end of~$G$.
    In particular, the end space of $G$ contains uncountably many pairwise disjoint open~sets.
\end{enumerate}
\end{theorem}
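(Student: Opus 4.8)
The plan is to prove the two implications separately, leaning on Theorem~\ref{main:SecondDualThree} for (i)$\Rightarrow$(ii) and on Theorem~\ref{EquivalentForSecondcount} for the converse. For the easy direction (ii)$\Rightarrow$(i), suppose $G$ carries such a star-decomposition. In each leaf part $V_t$ I would pick the component $C_t$ of $G$ minus the (finite) adhesion set $S_t$ of the incident edge that contains a tail of the end living in $V_t$, and consider the basic open sets $\Omega(S_t,C_t)$. These are nonempty, and they are pairwise disjoint because in a star-decomposition two distinct leaf parts meet only inside the central part, so $C_t$ is disjoint from every other leaf part; this yields the ``in particular'' clause. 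Uncountably many pairwise disjoint nonempty open sets preclude a countable dense set, so $\Omega(G)$ is not separable, and by Theorem~\ref{EquivalentForSecondcount} separability and second countability coincide for end spaces, whence $\Omega(G)$ is not second countable.

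For (i)$\Rightarrow$(ii) I would invoke Theorem~\ref{main:SecondDualThree} to obtain one of its three substructures and convert each into the desired star-decomposition. If $G$ contains an uncountable \sun, the ``moreover'' part of Theorem~\ref{main:FirstDualStars} already produces an uncountable star-decomposition of finite adhesion in which the centre lives in the central part and each leaf $\omega^i$ lives in its own leaf part, so every leaf part hosts an end. If instead there is a finite vertex set $X$ separating uncountably many ends simultaneously, I would take the uncountably many components $C$ of $G-X$ that host an end; since $C=C(X,\omega_C)$ with $N(C)\subseteq X$ finite, the golden separations $\lsep{C}{N(C)}$ are pairwise nested into a star (distinct components of $G-X$ have disjoint vertex sets and all separators lie in the finite set $X$), and the resulting star-decomposition has finite adhesion with an end living in each leaf part.

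The genuine obstacle is the third case: $G$ contains uncountably many pairwise disjoint generalised paths $(P^i,\{\alpha^i,\beta^i\})_{i\in I}$. Here the naive maximal-golden-star argument (as in Theorem~\ref{main:FirstDualStars}) fails, because uncountably many of the paths may nest inside a single leaf component, so the star need not grow. My plan is instead to spread the paths out using a tree-decomposition. Each pair $\{\alpha^i,\beta^i\}$ of distinct ends is separated by a finite set, hence is $k_i$-distinguishable for some finite $k_i$; pigeonholing on $\N$ I would pass to an uncountable subfamily with a common value $k_i=k$. Theorem~\ref{magicTDC} then supplies a single tree-decomposition $(T,\cV)$ of finite adhesion that distinguishes all $k$-distinguishable ends, so for each surviving $i$ some edge $e_i$ of $T$ induces a separation separating $\alpha^i$ from $\beta^i$. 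The connected path $P^i$ meets both sides of this separation (using that a dominating vertex of an end lies in the end's component), hence crosses the finite separator $S_{e_i}$. As the $P^i$ are pairwise vertex-disjoint and each $S_{e_i}$ is finite, the assignment $i\mapsto e_i$ is finite-to-one, so uncountably many \emph{distinct} edges $e_i$ occur, and in particular $T$ has uncountably many vertices. Now I would use the crucial structural fact that in a rooted graph-theoretic tree every vertex lies at a finite level: the uncountably many lower endpoints of the edges $e_i$ therefore accumulate on a single level, and vertices of a common level are pairwise incomparable in the tree-order. The corresponding edges, oriented away from the root, thus form an uncountable star of separations, i.e.\ an uncountable star-decomposition of finite adhesion; and since $e_i$ separates $\alpha^i$ from $\beta^i$, the endpoint-end landing on the branch side lives in that leaf part.

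I expect the generalised-paths case to be the hard part, and within it the two points to nail down carefully are: first, that the tree-decomposition from Theorem~\ref{magicTDC} genuinely has finite adhesion (this should follow from efficiency, as the distinguishing separators have order at most $k$); and second, the clean counting chain ``vertex-disjoint paths each crossing a finite separator $\Rightarrow$ finite-to-one $\Rightarrow$ uncountably many edges $\Rightarrow$ uncountably many vertices $\Rightarrow$ an uncountable antichain on one finite level.'' The remaining verifications—that each constructed family really is a \emph{star} of separations, and that in a star-decomposition no end can live at infinity of the (ray-free) star tree, so that each separated end truly lives in its leaf part—are routine given the separation calculus recalled in Section~\ref{subsec:tdcsStreesEnds}.
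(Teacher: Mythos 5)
Your proposal is correct, and its skeleton matches the paper's: the backward direction is the easy one via Theorem~\ref{EquivalentForSecondcount}, and the forward direction runs through Theorem~\ref{main:SecondDualThree}, with the hard (generalised-paths) case settled by pigeonholing a common $k$, invoking Theorem~\ref{magicTDC}, and extracting an uncountable star of finite-order separations from the resulting tree-decomposition. Where you genuinely differ is in the endgame and in the sun case. The paper funnels both the sun case and the disjoint-paths case through a single lemma (Lemma~\ref{matchingToGoldenStar}): there one roots $T$, forms the subtree $T'$ induced by the down-closure of the endvertices of the distinguishing edges, argues that $T'$ must have a vertex of \emph{uncountable degree} (otherwise $T'$ and hence the union of the relevant finite separators would be countable, contradicting that the uncountably many disjoint $P^i$ each meet that union in distinct vertices), and builds the golden star at that vertex. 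You instead dispatch the sun case directly via the `moreover' part of Theorem~\ref{main:FirstDualStars} --- legitimate and arguably shorter --- and in the paths case you pigeonhole the lower endvertices of the distinguishing edges onto a common level and use incomparability to form the star; this trades the paper's degree argument for a level argument, and both work. Two details to tighten. First, Theorem~\ref{magicTDC} does not assert that the whole tree-decomposition has finite adhesion, and you do not need it to: only the separations induced by your chosen edges $e_i$ must have finite order, which holds (order at most $k$) because they distinguish the $k$-distinguishable pair $\alpha^i,\beta^i$ \emph{efficiently}. Second, your incomparability step tacitly assumes the uncountably many edges $e_i$ have pairwise distinct lower endvertices; if several share a lower endvertex, incomparability does not literally apply, but the conclusion still holds since the subtrees above distinct children are disjoint --- so either treat that case separately (it is exactly the paper's uncountable-degree situation) or observe directly that any two distinct edges whose lower endvertices lie on a common level have disjoint up-subtrees, whence the separations oriented away from the root form a star.
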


Curiously, the star-decomposition in (ii) cannot be replaced with a star of pairwise inequivalent rays:

\begin{example}
There is a graph $G$ with second countable end space that contains uncountably many rays meeting precisely in their first vertex and representing distinct ends of $G$.
\end{example}
\begin{proof}
We start with $T_2$, the rooted binary tree.
Every end $\omega\in\Omega(T_2)$ is represented by a ray $R_\omega$ starting at the root.
We obtain the graph $H$ from $T_2$ by adding, for every end $\omega\in\Omega(T_2)$, a new ray $R'_\omega$ and joining the $n$th vertex of $R'_\omega$ to the $n$th vertex of $R_\omega$.
Note that the natural inclusion $\Omega(T_2)\subset\Omega(H)$ is a homeomorphism, so $\Omega(H)$ is second countable.
Finally, we obtain the graph $G$ from $H$ by adding a single new vertex $v^\ast$ that we join to the first vertex of every ray~$R'_\omega$.
The addition of $v^\ast$ did not affect the end space.
However, now $v^\ast$ together with its incident edges and all the rays $R'_\omega$ gives the desired substructure.
\end{proof}

We need the next lemma for the proof of Theorem~\ref{main:SecondDualGoldenStar}.
Recall that oriented separations of the form $\lsep{C}{N(C)}$ with $C=C(X,\omega)$ for some finite vertex set $X\in\cX$ and an end $\omega$ of $G$ are called \emph{golden}.
A star $\sigma$ of finite-order separations is \emph{golden} if every separation in $\sigma$ is golden.

\begin{lemma}\label{matchingToGoldenStar}
Let $G$ be any connected graph.
If there exist uncountably many pairwise vertex-disjoint generalised paths in~$G$, then $G$ admits an uncountable golden star of separations.
\end{lemma}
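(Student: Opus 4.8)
The plan is to reduce, via an auxiliary graph on the ends, to the case of an uncountable \emph{matching} of generalised paths, and then to build the golden star by a maximality argument in the spirit of the `moreover' part of Theorem~\ref{main:FirstDualStars}. First I would record the bounded-multiplicity fact already used in Theorem~\ref{main:SecondDualThree}: between any two fixed ends $\omega_1\neq\omega_2$ there are only finitely many vertex-disjoint generalised paths, since each such path meets every finite vertex set separating $\omega_1$ from $\omega_2$, so their number is at most the size of one such separator. Forming the auxiliary multigraph on $\Omega(G)$ with one edge per given generalised path, I may therefore pass to an uncountable subfamily in which this multigraph is a simple graph.

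Now I split into two cases. If some end $\omega$ has uncountable degree, its incident edges form an uncountable generalised star centred at $\omega$. As in Theorem~\ref{main:SecondDualThree}, this shows that $\Omega(G)$ is not first countable at $\omega$: for any candidate base $\{\,\Omega(X_n,\omega)\colon n\in\N\,\}$, all but countably many of the pairwise vertex-disjoint paths avoid $\bigcup_n X_n$, so the corresponding leaf lies in every $\Omega(X_n,\omega)$, contradicting that $\Omega(G)$ is Hausdorff. Theorem~\ref{main:FirstDualStars}, together with its `moreover' part, then produces an uncountable golden star directly, and we are done in this case. Otherwise every end has countable degree in the auxiliary graph, and a greedy selection yields an uncountable independent edge set, that is, an uncountable family $\{\,(P^i,\{\omega^i,\eta^i\})\colon i\in I\,\}$ of pairwise \emph{disjoint} generalised paths whose $2\lvert I\rvert$ endpoints are pairwise distinct ends of $G$.

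For this matching I would apply Zorn's lemma to the set of golden stars $\sigma$ equipped with an injection $c$ into $I$ such that each separation $\lsep{C}{N(C)}\in\sigma$ has its whole path contained in its leaf component, $P^{c(C)}\subset C$; distinct leaves then isolate distinct paths. Let $\sigma$ be maximal and suppose, for contradiction, that $\sigma$ is countable. Writing $U$ for the countable union of the separators of $\sigma$, vertex-disjointness of the $P^i$ implies that all but countably many unused paths avoid $U$; fix the uncountable set $J$ of unused indices $j$ with $P^j\cap U=\emptyset$. Since the leaf components of a star are pairwise disjoint and each $P^j$ avoids all separators, each such $P^j$ lies in a unique leaf component or avoids them all. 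The crux is to convert the existence of these leftover paths into an extension of $\sigma$, contradicting maximality: concretely, I want to exhibit an unused path $P^j$ lying outside every leaf and an end $\mu\in\{\omega^j,\eta^j\}$ with $\mu\notin\Abs{U}$, for then a component $C=C(X,\mu)$ avoiding $U$ and containing $P^j$ gives a golden separation $\lsep{C}{N(C)}$, and the argument of Theorem~\ref{main:FirstDualStars} shows $\sigma\cup\{\lsep{C}{N(C)}\}$ is again such a star (compatibility is automatic once $C$ avoids $U$, because a path forced into a leaf would place both its endpoints there).

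The main obstacle is exactly ruling out the two ways this extension can fail, which is where the argument departs from the single-centre situation of Theorem~\ref{main:FirstDualStars}. The first danger is a \emph{fat leaf}: uncountably many unused paths could hide inside one leaf component $C^\ast$, in which case I would recurse into the torso of $C^\ast$, or, preferably, arrange the isolating cuts to be inclusion-minimal so that no leaf can absorb uncountably many further disjoint paths. The second, more serious danger is that all the uncountably many leftover endpoints lie in $\Abs{U}$. To handle this I would invoke Jung's Lemma~\ref{lemma: JungCountableSet} to obtain a countable normal tree $T\subset G$ containing $U$ cofinally; then $\Abs{U}\subset\Abs{T}$, so by Lemma~\ref{lemma: NormalTreeNormalRay} these endpoints are realised as distinct normal rays of the countable tree $T$. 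Applying the star--comb lemma and the normality of $T$ to the uncountably many vertex-disjoint paths $P^j$ joining these normal rays, one either separates one of the endpoints off after all (contradicting maximality) or extracts a finite vertex set of $T$ that splits uncountably many of these ends into distinct components of $G-T$, which by itself yields the desired uncountable golden star exactly as in the finite-separator case of Theorem~\ref{main:SecondDualThree}. Establishing this dichotomy cleanly — in particular, that the normal tree cannot accommodate uncountably many of these paired, vertex-disjoint chords without producing either an enlargement or a splitting finite separator — is the step I expect to require the most care.
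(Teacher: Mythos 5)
Your reduction via the auxiliary graph on $\Omega(G)$ is sound and mirrors the first half of the proof of Theorem~\ref{main:SecondDualThree}: only finitely many vertex-disjoint generalised paths can join a fixed pair of ends, and in the uncountable-degree case Theorem~\ref{main:FirstDualStars} together with its `moreover' part does hand you an uncountable golden star (its proof constructs exactly such a star $\sigma$). The genuine gap is the matching case, which is where the whole content of the lemma lies: there your argument is a programme, not a proof. The Zorn/extension argument you import from the `moreover' part of Theorem~\ref{main:FirstDualStars} depends crucially on all paths sharing the centre $\omega$: any path avoiding a finite set $X$ is connected and has a tail in (or a vertex dominating) $\omega$, hence lies inside $C(X,\omega)$, and this anchor is what makes both the star-compatibility check (``$\omega$ would have to live in $D$'') and the step ``$\omega^j$ in the closure of $U$ implies $\omega^j=\omega$'' work. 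With pairwise distinct endpoint pairs there is no such anchor, and the two failure modes you yourself identify are real: compatibility is \emph{not} automatic (if the new component $C$ meets an old leaf $D$ then $C\subset D$ and the star property fails, and nothing prevents uncountably many unused paths from hiding inside one leaf), and the concluding dichotomy (``one either separates one of the endpoints off after all, or extracts a finite vertex set of $T$ splitting uncountably many ends'') is precisely the statement that would need proving; neither ``recurse into the torso'', nor ``take inclusion-minimal cuts'', nor an unspecified application of the star--comb lemma to normal rays of $T$ establishes it, since the uncountably many paths $P^j$ may meet the countable normal tree $T$ in arbitrarily complicated ways.

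The paper resolves exactly this difficulty with machinery your proposal avoids but does not replace. By pigeonhole there is a single $k\in\N$ such that uncountably many endpoint pairs $\omega_1^i,\omega_2^i$ are $k$-distinguishable; Theorem~\ref{magicTDC} then provides one tree-decomposition $(T,\cV)$ of $G$ that efficiently distinguishes all $k$-distinguishable ends. Rooting $T$ and letting $T'$ be the subtree induced by the down-closure of the endvertices of those edges whose induced separations distinguish some pair $\omega_1^i,\omega_2^i$, either some $t\in T'$ has uncountable degree --- and then uncountably many of the ends $\omega_1^i,\omega_2^i$ live in their own finite-neighbourhood components of $G-V_t$, giving the golden star --- or every vertex of $T'$ has countable degree, whence $T'$ (a rooted graph-theoretic tree, so with countably many countable levels) is countable; but then the union $U$ of the separators induced by the edges of $T'$ is countable, while every $P^i$ must cross the separator of an edge distinguishing its two endpoints, and these crossing vertices are distinct for distinct $i$ by vertex-disjointness --- a contradiction. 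This crossing argument, which forces uncountably many disjoint paths to pay distinct vertices into a countable set unless uncountably many separations accumulate at a single location, is the idea your sketch is missing.
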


\begin{proof} 
Let $\{\,(P^i,\{\omega_1^i,\omega_2^i\})\colon i\in I\,\}$ be any uncountable collection of pairwise vertex-disjoint generalised paths in $G$.
By the pigeonhole principle there exists a number $k\in\N$ and an uncountable subset $J\subset I$ such that for all $j \in J$ the ends $\omega_1^j$ and $\omega_2^j$ are $k$-distinguishable.
Without loss of generality $J=I$.
By Theorem~\ref{magicTDC} we find a tree-decomposition $(T,\cV)$ of $G$ that efficiently distinguishes all the \mbox{$k$-dis}tinguishable ends of~$G$.

Fix an arbitrary root $r\in T$ and write $F$ for the collection of all the edges $e\in T$ whose induced separation distinguishes two ends $\omega_1^i$ and $\omega_2^i$.
Then let $T'\subset T$ be the subtree that is induced by the down-closure of the endvertices of the edges in $F$ in the rooted tree $T$.
If $T'$ has a vertex $t$ of uncountable degree, then we find an uncountable subset $\Psi\subset\{\,\omega_1^i,\omega_2^i\colon i\in I\,\}$ such that every end $\omega\in\Psi$ lives in its own component $C_\omega$ of $G-V_t$ with finite neighbourhood; in particular, $\{\,\lsep{C_\omega}{N(C_\omega)}\colon \omega\in\Psi\,\}$ is the desired uncountable golden star of separations.
We claim that $T'$ must have a vertex of uncountable degree.
Otherwise, $T'$~is countable.
Then the union $U$ of the separators of the separations induced by the edges of $T'\subset T$ is a countable vertex set. In order to obtain a contradiction note that every $P^i$ meets $U$ in at least one vertex and these vertices are distinct for distinct $P^i$.
\end{proof}

\begin{proof}[{Proof of Theorem~\ref{main:SecondDualGoldenStar}}]
Recall that, by Theorem~\ref{EquivalentForSecondcount}, the end space of $G$ is second countable if and only if it has a countable base that consists of basic open sets.
Then clearly (ii)$\Rightarrow$(i).

(i)$\Rightarrow$(ii) Suppose that the end space of $G$ is not second countable. We are  done if there is a finite vertex set separating uncountably many ends of $G$ simultaneously. Therefore, we may assume by Theorem~\ref{main:SecondDualThree} that either there is an uncountable sun in $G$ or $G$ contains uncountably many disjoint generalised paths. In either case we are done by Lemma~\ref{matchingToGoldenStar}.
\end{proof}

Theorems~\ref{EquivalentForSecondcount} and~\ref{main:SecondDualGoldenStar} structurally characterise the graphs whose end spaces are second countable or not, by the structures in terms of which Theorems~\ref{introTheoremThree} and~\ref{introTheoremFour} are phrased.
The next theorem allows us to deduce Theorems~\ref{introTheoremThree} and~\ref{introTheoremFour} from Theorems~\ref{EquivalentForSecondcount} and~\ref{main:SecondDualGoldenStar} immediately.

\begin{theorem}\label{lemma: G count. det. iff Omega second count.}
The end space of a graph is second countable if and only if the graph is countably determined.
\end{theorem}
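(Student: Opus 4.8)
The plan is to establish both directions of the equivalence by translating between the two notions through the correspondence between ends, directions, and directional choices that has been developed earlier in the paper. Recall that a graph $G$ is countably determined if there is a countable set of directional choices distinguishing every two directions of $G$, while the end space $\Omega(G)$ is second countable if its topology has a countable base. Since a directional choice $(X,C)$ corresponds exactly to a basic open set $\Omega(X,C)$, and since the bijection $\omega\mapsto f_\omega$ identifies ends with directions, the two properties should be two sides of the same coin. I would prove both implications directly from the definitions, using the Hausdorff property of $\Omega(G)$ and the already-established Theorem~\ref{EquivalentForSecondcount}.

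First I would show that second countability implies countable determinacy. Suppose $\Omega(G)$ has a countable base; by Theorem~\ref{EquivalentForSecondcount}~(i)$\Rightarrow$(iv) we may take this base to consist of basic open sets $\Omega(X_n,C_n)$ for $n\in\N$. Each such basic set corresponds to a directional choice $(X_n,C_n)$, and I claim the countable collection $\{(X_n,C_n)\colon n\in\N\}$ distinguishes every two directions. Indeed, given distinct directions $f\ne h$, their ends $\omega_f\ne\omega_h$ are separated by some finite vertex set, so there is a basic open set $\Omega(X,C)$ containing $\omega_f$ but not $\omega_h$; since the $\Omega(X_n,C_n)$ form a base, some basic set from our collection lies between, containing $\omega_f$ and avoiding $\omega_h$, and the corresponding directional choice distinguishes $f$ from $h$.

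For the converse I would argue that countable determinacy implies a countable base. If $\{(X_n,C_n)\colon n\in\N\}$ is a countable set of directional choices distinguishing every two directions, then I take the countable collection $\mathcal{B}$ of basic open sets $\Omega(X_n,C_n)$ together with their complements (the complement of a basic open set is a finite union of basic open sets, hence the enlarged collection stays countable) and I claim that the finite intersections of members of $\mathcal{B}$ form a countable base. The key point is that the directional choices separate points of $\Omega(G)$ in the Hausdorff sense: for any end $\omega$ and any basic neighbourhood $\Omega(X,\omega)$, every other end $\eta\notin\Omega(X,\omega)$ is distinguished from $\omega$ by some $(X_n,C_n)$, so $\Omega(X_n,C_n)$ separates $\omega$ from $\eta$; a standard compactness-free argument using that $\Omega(G)$ has a basis of clopen sets then lets me realise $\Omega(X,\omega)$ as a union of finite intersections drawn from $\mathcal{B}$.

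The main obstacle I anticipate is the converse direction: passing from a separating family of directional choices to an honest countable base requires care, because a single directional choice only separates $\omega$ from the ends outside $\Omega(X_n,C_n)$ one at a time, and one must assemble these into neighbourhoods that actually refine a given basic open set. The cleanest route is probably to invoke Theorem~\ref{EquivalentForSecondcount} in the other direction as well: rather than building the base by hand, I would show that a countably determined graph forces second countability via one of the equivalent conditions there, for instance by using the separating family to produce a countable dense set or a countable end-faithful normal tree, thereby reducing the delicate point-set-topology bookkeeping to the structural equivalence already proved. I expect the argument to be short once the correspondence between directional choices and basic open sets is made explicit, with the only genuine subtlety being the clopen-basis manoeuvre just described.
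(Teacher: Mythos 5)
Your forward implication is correct and is essentially the paper's own argument: by Theorem~\ref{EquivalentForSecondcount} take a countable base of basic open sets $\Omega(X_n,C_n)$, and the corresponding directional choices distinguish any two directions. The converse, however, is where the entire content of the theorem lies, and there your proposal has a genuine gap. Your first route (closing the separating family under complements and finite intersections to get a base) rests on a false principle: a countable family of clopen sets that separates the points of a Hausdorff space does not in general generate a base --- it only yields a continuous injection into $2^\N$, which is a homeomorphism onto its image only under extra hypotheses such as compactness, and end spaces are not compact in general. (The Sorgenfrey line, with the clopen intervals having rational endpoints, is a standard counterexample to the principle.) Your parenthetical claim that the complement of a basic open set is a \emph{finite} union of basic open sets is also wrong: $G-X$ may have infinitely many components. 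So the ``clopen-basis manoeuvre'' is precisely the step that fails; indeed, the assertion that a countable separating family of directional choices forces second countability \emph{is} the theorem, and it cannot be had by soft point-set arguments.

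Your second route --- extract a countable end-faithful normal tree (or dense set) from the separating family and quote Theorem~\ref{EquivalentForSecondcount} --- is the right idea in spirit, but you never carry it out, and the reduction you dismiss as ``bookkeeping'' is the core of the proof. Concretely: taking a countable normal tree $T$ containing $\bigcup_n X_n$ cofinally (Lemma~\ref{lemma: JungCountableSet}), the problem is the ends not in the closure of $T$; one must show there are only countably many of them. Distinct such ends live in distinct components of $G-T$ (since the $X_n$ avoid those components), but after a pigeonhole argument one is left with exactly the hard case: uncountably many ends living in components with the \emph{same} finite neighbourhood $X$, and one must show that no countable set of directional choices can pairwise distinguish them. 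This is not a counting matter --- the map sending each such end to its $0$--$1$ pattern of membership in the sets $C_n$ is injective into $2^\N$, which has room for uncountably many ends --- so one needs the structural observation that each $C_n$ is either contained in a single one of these components or contains all but finitely many of them, whence each choice $(X_n,C_n)$ fails to distinguish within a cofinite subset of the ends, and the intersection of countably many such cofinite subsets is still uncountable. The paper routes this direction through the trichotomy of Theorem~\ref{main:SecondDualThree} (uncountable sun, uncountably many disjoint generalised paths, or a finite vertex set separating uncountably many ends): in the first two cases some generalised path avoids every $X_n$, giving two indistinguishable directions, and the third case is handled by the cofinite-subset argument just described. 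None of this appears in your proposal, so the backward implication remains unproven as written.
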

\begin{proof}
Let $G$ be any graph.
For the forward implication, suppose that the end space of $G$ is second countable.
Then, by Theorem~\ref{EquivalentForSecondcount}, the end space of~$G$ has a countable base $\{\,\Omega(X_n,C_n)\colon n\in\N\,\}$ that consists of basic open sets $\Omega(X_n,C_n)\subset\Omega(G)$.
We claim that the directional choices $(X_n,C_n)$ distinguish every two directions of~$G$ from each other.
For this, let any two distinct directions $f,h$ of $G$ be given.
Since the end space of $G$ is Hausdorff, there is a number $n\in\N$ such that $\Omega(X_n,C_n)$ contains $\omega_f$ but not $\omega_h$; in particular, $f(X_n)=C_n$ and $h(X_n)\neq C_n$ as desired.

For the backward implication suppose that $G$ is countably determined, and let $\{\,(X_n,C_n)\colon n\in\N\,\}$ be any countable set of directional choices $(X_n,C_n)$ in $G$ that distinguish every two directions of $G$ from each other.
Let us assume for a contradiction that the end space of $G$ is not second countable.
Then, by Theorem~\ref{main:SecondDualThree}, the graph $G$ contains either
\begin{itemize}[label=\textbf{--}]
        \item{an uncountable sun, }
        \item{uncountably many disjoint generalised paths, or}
        \item{a finite vertex set that separates uncountably many ends of $G$ simultaneously.}
\end{itemize}
If $G$ contains an uncountable sun or uncountably many disjoint generalised paths, then in either case $G$ contains a generalised path $(P,\{\omega_1,\omega_2\})$ such that $P$ avoids all the countably many finite vertex sets~$X_n$.
But then no directional choice $(X_n,C_n)$ distinguishes $f_{\omega_1}$ from $f_{\omega_2}$ or vice versa, a contradiction.
Thus, there must be a finite vertex set $X\subset V(G)$ that separates uncountably many ends $\omega_i$ ($i\in I$) of $G$ simultaneously.
We abbreviate $C(X,\omega_i)$ as $D_i$.
By the pigeonhole principle we may assume that $X=N(D_i)$ for all $i\in I$.

Let us consider the subset $N\subset\N$ of all indices $n\in\N$ whose directional choice $(X_n,C_n)$ distinguishes some $f_{\omega_i}$ from some $f_{\omega_j}$. Every component $C_n$ with $n\in N$ meets some component $D_i$ because some end $\omega_i$ lives in~$C_n$.
Then, for all $n\in N$, either $C_n$ is contained in some component $D_i$ entirely, or $C_n$ meets $X$ and contains all of the components $D_i$ except possibly for the finitely many components $D_i$ that meet $X_n$.
This means that every directional choice $(X_n,C_n)$ with $n\in N$ either distinguishes finitely many directions $f_{\omega_i}$ from uncountably many directions $f_{\omega_j}$ or vice versa.
Thus, for every $n\in N$ there is a cofinite subset $I_n\subset I$ such that no $f_{\omega_i}$ with $i\in I_n$ is distinguished by $(X_n,C_n)$ from any other $f_{\omega_j}$ with $j\in I_n$.
But then the uncountably many directions $f_{\omega_i}$ with $i\in\bigcap_{n\in N}I_n$ are not distinguished from each other by any directional choices $(X_n,C_n)$, a contradiction.
\end{proof}

\begin{proof}[{Proof of Theorem~\ref{introTheoremThree}}]
Theorem~\ref{EquivalentForSecondcount} and Theorem~\ref{lemma: G count. det. iff Omega second count.} together imply Theorem~\ref{introTheoremThree}.
\end{proof}

\begin{proof}[{Proof of Theorem~\ref{introTheoremFour}}]
Theorem~\ref{main:SecondDualGoldenStar} and Theorem~\ref{lemma: G count. det. iff Omega second count.} together imply Theorem~\ref{introTheoremFour}.
\end{proof}

\section{First and second countability for \texorpdfstring{$\vert G \vert $}{|G|}}\label{section: modG}

\noindent In this section, we employ our results to characterise when the spaces $\vert G\vert$ formed by a graph and its ends are first countable or second countable.

First, we describe a common way to extend the topology on $\Omega(G)$ to a topology on $|G| = G \cup \Omega(G)$. The topology called \textsc{MTop}, has a basis formed by all open sets of $G$ considered as a metric length-space (i.e.\ every edge together with its endvertices forms a unit interval of length~$1$, and the distance between two points of the graph is the length of a shortest arc in $G$ between them), together with basic open neighbourhoods for ends of the form
\begin{align*}
    \hat{C}_\varepsilon(X,\omega) := C(X,\omega) \cup \Omega(X,\omega) \cup \mathring{E}_\varepsilon(X, C(X,\omega)),
\end{align*}
where $\mathring{E}_\varepsilon(X, C(X,\omega))$ denotes the open ball around $C(X,\omega)$ in $G$ of radius $\varepsilon$. Polat observed that the subspace $V(G) \cup \Omega(G)$ is homeomorphic to $\Omega(G^+)$, where $G^+$ denotes the graph obtained from $G$ by gluing a new ray $R_v$ onto each vertex $v$ of $G$ so that $R_v$ meets $G$ precisely in its first vertex $v$ and $R_v$ is distinct from all other $R_{v'}$, cf.~\cite[\S4.16]{polat1996ends}.

Note that $\vert G\vert$ with \textsc{MTop} is first countable at every vertex of $G$ and at inner points of  edges.

\newpage

\begin{lemma}
For every graph $G$ and every end $\omega$ in the space $\vert G\vert$ with \textsc{MTop}, the following assertions are equivalent:
\begin{enumerate}
    \item $\vert G \vert$ is first countable at $\omega$.
    \item The end $\omega$ is represented by a ray $R$ such that every component of $G-R$ has finite neighbourhood.
\end{enumerate}
\end{lemma}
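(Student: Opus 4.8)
The plan is to prove the equivalence by connecting the metric-topology neighbourhood base at $\omega$ in $\vert G\vert$ to the combinatorial structure of a ray representing $\omega$. The key observation is that, under \textsc{MTop}, a basic neighbourhood $\hat C_\varepsilon(X,\omega)$ sees not only the ends living in $C(X,\omega)$ but also the \emph{vertices and edges} of $G$ near that component. So first countability at $\omega$ is a genuinely finer condition than first countability of the end space $\Omega(G)$ at $\omega$; in particular, (i) here should force (ii) of Theorem~\ref{thm: equiv. first countable}, but also impose a local finiteness condition captured by the neighbourhoods of components of $G-R$.

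For the implication (ii)$\Rightarrow$(i), I would start with a ray $R\in\omega$ such that every component of $G-R$ has finite neighbourhood. Writing $X_n$ for the first $n$ vertices of $R$, I claim that $\{\,\hat C_{1/n}(X_n,\omega)\colon n\in\N\,\}$ is a countable neighbourhood base for $\omega$ in $\vert G\vert$. To verify this, take any basic neighbourhood $\hat C_\varepsilon(X,\omega)$ of $\omega$. Since every component of $G-R$ has finite neighbourhood and $R$ represents $\omega$, the ray $R$ is directional (hence topological) in $G$ by Theorem~\ref{thm: equiv. first countable}: the finite-neighbourhood condition is exactly what lets a finite initial segment of $R$ separate $\omega$ from any other end, so one obtains some $n_0$ with $\Omega(X_{n_0},\omega)\subset\Omega(X,\omega)$ and $C(X_{n_0},\omega)\subset C(X,\omega)$. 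The remaining point is to shrink the $\varepsilon$-ball so that the metric part of the smaller neighbourhood lands inside the given one; since the components of $G-R$ meeting $C(X_{n_0},\omega)$ attach to $R$ along finite sets, one can choose $n\ge n_0$ large and $1/n$ small enough that $\mathring E_{1/n}(X_n,C(X_n,\omega))$ stays within $\hat C_\varepsilon(X,\omega)$, using that the vertices of $R$ before $X_{n_0}$ are at positive metric distance from the far components. This yields $\hat C_{1/n}(X_n,\omega)\subset\hat C_\varepsilon(X,\omega)$.

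For (i)$\Rightarrow$(ii), suppose $\vert G\vert$ is first countable at $\omega$. Since the end-space topology is a quotient/subspace of \textsc{MTop} restricted to ends, first countability at $\omega$ in $\vert G\vert$ forces first countability of $\Omega(G)$ at $\omega$, so by Theorem~\ref{thm: equiv. first countable} the end $\omega$ is represented by a directional (equivalently topological) ray $R$. It remains to upgrade $R$ to a ray all of whose $G-R$ components have finite neighbourhood. The plan is to run the same construction as in Theorem~\ref{thm: equiv. first countable}~(ii)$\Rightarrow$(iii): take a countable neighbourhood base, collect the relevant finite vertex sets together with $V(R)$ into a countable set $U$, and apply Jung's Lemma~\ref{lemma: JungCountableSet} to obtain a normal tree $T$ containing $U$ cofinally, with a normal ray $R_\omega\subset T$ in $\omega$. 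For a normal tree, the neighbourhood of each component of $G-T$ forms a chain, and I would argue that along $R_\omega$ the relevant components attach finitely: if some component of $G-R_\omega$ had infinite neighbourhood on $R_\omega$, the extra metric-space points it contributes would prevent any countable metric neighbourhood base from working at $\omega$, contradicting (i).

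The main obstacle I anticipate is exactly this last step: translating ``first countability fails in \textsc{MTop}'' into ``some component of $G-R$ has infinite neighbourhood'' for a suitably chosen normal ray. The subtlety is that first countability in the end space (which we already have) does not by itself control the metric balls $\mathring E_\varepsilon$, and a component with infinite neighbourhood along $R_\omega$ accumulates vertices arbitrarily close to $\omega$ in \textsc{MTop} that no single countable family of $\varepsilon$-balls can uniformly avoid. Making this precise will likely require a diagonal argument: assuming every normal ray obtained from the construction admits such a bad component, build a sequence of interior edge-points converging to $\omega$ in \textsc{MTop} that is not eventually captured by any proposed countable base, contradicting first countability. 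I would devote the bulk of the proof to this direction, since (ii)$\Rightarrow$(i) is essentially bookkeeping once the topological-ray characterisation of Theorem~\ref{thm: equiv. first countable} is invoked.
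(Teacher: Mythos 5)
Your direction (ii)$\Rightarrow$(i) is essentially the paper's argument and is fine in outline, with one caveat: the containment $C(X_{n_0},\omega)\subset C(X,\omega)$ does \emph{not} follow from $R$ being directional or topological, as those notions only concern ends. It follows from hypothesis (ii) directly, which lets a finite initial segment of $R$ separate every \emph{vertex} from $\omega$: if $v$ lies in a component $C$ of $G-R$, take an initial segment containing the finite set $N(C)$. Having separated all vertices of $X$ this way, one gets $C(X_n,\omega)\subseteq C(X,\omega)$, and then choosing $1/n\le\varepsilon$ finishes the bookkeeping, exactly as in the paper.

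The genuine gap is in (i)$\Rightarrow$(ii), and you have flagged it yourself but not closed it; moreover, the plan you sketch for closing it would fail. The contradiction you intend to derive --- ``if some component of $G-R_\omega$ has infinite neighbourhood, then the extra metric points prevent any countable neighbourhood base at $\omega$'' --- is false as a general implication: in the graph consisting of a ray $R$ plus one vertex $v$ joined to all of $R$, the component $\{v\}$ of $G-R$ has infinite neighbourhood, yet $|G|$ \emph{is} first countable at the unique end (witnessed by the ray starting at $v$). So a bad component for one particular ray contradicts nothing; any contradiction must exploit the specific construction of $R_\omega$, and that is precisely what the proposal leaves open. (Also, the proposed ``diagonal argument'' --- a sequence of edge-points converging to $\omega$ yet not eventually captured by the candidate base --- is incoherent: convergence to $\omega$ means by definition that every neighbourhood of $\omega$, in particular every member of any base, eventually contains the sequence.) The idea you are missing is the paper's use of Polat's observation: the subspace $V(G)\cup\Omega(G)$ of $|G|$ with \textsc{MTop} is homeomorphic to $\Omega(G^+)$, where $G^+$ arises from $G$ by attaching a new ray $R_v$ at every vertex $v$. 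First countability of $|G|$ at $\omega$ passes to this subspace, so $\Omega(G^+)$ is first countable at $\omega$; applying Theorem~\ref{thm: equiv. first countable} to $G^+$ (rather than to $G$, as you do) yields a ray $R$ that is directional \emph{in $G^+$}, which may be assumed to lie in $G$. Directionality in $G^+$ is exactly condition (ii): if a component $C$ of $G-R$ had infinite neighbourhood, then $C\subseteq C(X,\omega)$ for every finite initial segment $X$ of $R$, so for $v\in C$ the end $\omega_v$ of $G^+$ represented by $R_v$ would lie in every $\Omega_{G^+}(X,\omega)$, contradicting that $R$ is directional in $G^+$. Attaching the rays $R_v$ turns vertices into ends, giving the control over vertices that applying Theorem~\ref{thm: equiv. first countable} to $G$ alone cannot provide.
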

\noindent Note that every ray as in~(ii) is directional and contains all the vertices that dominate the end.

\begin{proof}
(i)$\Rightarrow$(ii) 
If $|G|$ is first countable at $\omega$, then $\Omega(G^+)$ is also first countable at~$\omega$. 
So, by Theorem~\ref{thm: equiv. first countable}, the end $\omega$ of $G$ considered as an end of $G^+$ is represented by a directional ray $R$ in $G^+$. 
We may assume that $R \subseteq G$. 
Now, if there is a component $C$ of $G-R$ such that $N(C)$ is infinite, then $C$ is contained in $C(X,\omega)$ for every finite initial segment $X$ of $R$. Consider a vertex $v \in C$. The end $\omega_v$ of $G^+$ represented by $R_v$ is contained in all of the sets $\Omega_{G^+}(X, \omega)$, contradicting the fact that $R$ is directional.

(ii)$\Rightarrow$(i)Let $R \in\omega$  be as in (ii). First note that for every vertex $v \in V(G)$ there is a finite initial segment $X$ of $R$ that separates $v$ from $\omega$ in that $v \not\in C(X,\omega)$. Now, if $X'\subseteq V(G)$ is a finite vertex set, chose a finite initial segment $X$ of $R$ that separates every vertex in $X'$ from $\omega$. Then $C(X, \omega) \subseteq C(X',\omega)$. Hence the sets $\hat{C}_\frac{1}{n}(X ,\omega)$, with $n\in \N$ and $X$ a finite initial segment of $R$, form a countable neighbourhood base for $\omega$.
\end{proof}

An end is called \emph{fat} if there are uncountably many disjoint rays that represent the end.

\begin{lemma}
For every graph $G$ and every end $\omega$ in the space $\vert G\vert$ with \textsc{MTop}, the following assertions are equivalent:
\begin{enumerate}
    \item $\vert G \vert$ is not first countable at $\omega$.
    
    \item The end $\omega$ is fat or dominated by uncountably many vertices.
\end{enumerate}
\end{lemma}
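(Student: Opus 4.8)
The plan is to prove both implications by relating first countability of $\vert G\vert$ with \textsc{MTop} at $\omega$ to first countability of $\Omega(G^+)$ at $\omega$, exactly as in the preceding lemma, and then to feed this into Theorem~\ref{main:FirstDualStars} applied to $G^+$. The key observation is that Polat's homeomorphism identifies the subspace $V(G)\cup\Omega(G)$ of $\vert G\vert$ with $\Omega(G^+)$, and one checks that $\vert G\vert$ is first countable at $\omega$ if and only if this subspace is first countable at $\omega$ (the inner edge-points contribute only a metric neighbourhood base and cannot destroy first countability at an end). Hence $\vert G\vert$ is \emph{not} first countable at $\omega$ if and only if $\Omega(G^+)$ is not first countable at $\omega$, which by Theorem~\ref{main:FirstDualStars} holds if and only if there is an uncountable sun in $G^+$ centred at~$\omega$.

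The real content is therefore to translate ``uncountable sun in $G^+$ centred at $\omega$'' into the combinatorial condition of assertion~(ii), namely ``$\omega$ is fat or dominated by uncountably many vertices of $G$''. First I would establish (ii)$\Rightarrow$(i). If $\omega$ is fat, then $G$ contains uncountably many disjoint rays in $\omega$, and attaching to each of these the rays $R_v'$ of $G^+$ one builds, via the leaf ends of the $R_v'$, an uncountable generalised star centred at $\omega$ in $G^+$; by the argument in the proof of Theorem~\ref{main:SecondDualThree} this yields an uncountable sun. If instead $\omega$ is dominated by uncountably many vertices $v$ of $G$, then in $G^+$ each such $v$ dominates $\omega$ and starts the added ray $R_v'$, so the pairs $(R_v',\{\omega,\omega_v\})$ (where $\omega_v$ is the end of $G^+$ represented by $R_v'$) form an uncountable collection of pairwise vertex-disjoint generalised paths with centre $\omega$, and after passing to an uncountable subcollection with distinct leaf ends we obtain the desired uncountable sun. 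In both cases Theorem~\ref{main:FirstDualStars} gives that $\Omega(G^+)$ is not first countable at $\omega$, whence neither is $\vert G\vert$.

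For the converse (i)$\Rightarrow$(ii), I would start from an uncountable sun $\{(P^i,\{\omega,\omega^i\})\colon i\in I\}$ in $G^+$ centred at $\omega$. By definition each $P^i$ is either a double ray with a tail in $\omega$ and another tail in $\omega^i$, or a ray in $\omega^i$ whose first vertex dominates $\omega$. The plan is to push these structures back into $G$. The rays $R_v'$ added in forming $G^+$ are essentially ``disposable'': each meets $G$ only in its attachment vertex $v$, so any tail of a sun path that enters some $R_v'$ can be rerouted. The main case analysis is: if uncountably many of the $P^i$ are rays whose first vertex dominates $\omega$, then those first vertices are uncountably many vertices dominating $\omega$; after discarding the at most countably many that are newly added ray-vertices (or replacing each such first vertex by the genuine $G$-vertex $v$ it attaches to), we obtain uncountably many vertices of $G$ dominating $\omega$, giving the second alternative of~(ii). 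Otherwise uncountably many of the $P^i$ are double rays each having a tail in $\omega$; these tails are pairwise disjoint and all lie in $\omega$, and by trimming away the countably many that run along added rays $R_v'$ we extract uncountably many disjoint rays of $G$ in $\omega$, so $\omega$ is fat.

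The step I expect to be the main obstacle is the bookkeeping in this last translation: making rigorous that the added rays $R_v'$ can be removed or rerouted without losing uncountability, and that a tail of a sun path lying in $\omega$ can always be replaced by a ray of $G$ in $\omega$ (respectively, a dominating vertex in $G^+$ can be traced to a dominating vertex of $G$). The cleanest way to handle this is to exploit the ``moreover'' part of Theorem~\ref{main:FirstDualStars}, which supplies an uncountable star-decomposition of $G^+$ of finite adhesion with $\omega$ in the central part and each $\omega^i$ in its own leaf part; since each added ray $R_v'$ is attached to $G$ at the single vertex $v$, at most one leaf part can ``use'' each $R_v'$ in an essential way, and finite adhesion forces the sun paths to return to $G$, so the uncountability survives the passage from $G^+$ back to $G$.
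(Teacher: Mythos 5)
Your proposal is correct, and its (i)$\Rightarrow$(ii) half is essentially the paper's own proof: pass to $G^+$ via Polat's homeomorphism, apply Theorem~\ref{main:FirstDualStars} to get an uncountable sun in $G^+$ centred at $\omega$, and translate it back into $G$. (Your bookkeeping worries can be dispatched more simply than via the star-decomposition: no internal vertex of an added ray can dominate any end, having degree at most two, and every ray of $\omega$ in $G^+$ meets the added rays in at most a finite initial segment, hence has a tail in $G$; so nothing needs ``discarding'' --- your counting phrases are slightly off, but your fallback moves are exactly right.) Where you genuinely diverge is (ii)$\Rightarrow$(i). The paper does not go through $G^+$ for this direction at all: it argues directly in $\vert G\vert$ that any countable collection $\{\,\hat{C}_{\varepsilon_n}(X_n,\omega) \colon n\in\N\,\}$ of basic neighbourhoods fails to be a base, because $\bigcup_n X_n$ is countable, so some ray of a fat end (or some dominating vertex) avoids every $X_n$; the first vertex of that ray (respectively the dominating vertex) then lies in every $\hat{C}_{\varepsilon_n}(X_n,\omega)$ yet is a point of $\vert G\vert$ distinct from $\omega$. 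You instead construct an uncountable sun in $G^+$ (the double rays $R_{v}\cup S$ in the fat case, the added rays at dominating vertices in the other case) and invoke Theorem~\ref{main:FirstDualStars}(ii)$\Rightarrow$(i) together with subspace inheritance of first countability. Both are sound: the paper's argument is a one-line elementary one exploiting that $\vert G\vert$ contains vertices as points, while yours is heavier but uniform with the converse direction and uses only the easy (subspace) half of the transfer between $\vert G\vert$ and $\Omega(G^+)$. One caveat applies to you and the paper equally: the implication ``$\Omega(G^+)$ first countable at $\omega$ implies $\vert G\vert$ first countable at $\omega$'', which both of you use contrapositively in (i)$\Rightarrow$(ii), is not mere subspace inheritance; your parenthetical reason is the right one --- the trace of $\hat{C}_{\varepsilon}(X,\omega)$ on $V(G)\cup\Omega(G)$ is independent of $\varepsilon$, so a countable subspace base upgrades to a base of $\vert G\vert$ by taking all radii $1/m$ --- and the paper asserts this step with no more detail than you do.
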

\begin{proof}
(i)$\Rightarrow$(ii) If $\vert G\vert$ is not first countable at $\omega$, then $\omega$ considered as end of $G^+$ has no countable neighbourhood base in $\Omega(G^+)$ either. By Theorem~\ref{main:FirstDualStars} there is an uncountable sun in $G^+$, 
which gives either uncountably many disjoint rays in $G$ that represent $\omega$ or uncountably many vertices in $G$ that dominate $\omega$.

(ii)$\Rightarrow$(i) Let $\omega$ be an end of $G$. 
If there are uncountably many disjoint rays  that represent $\omega$ or uncountably many vertices  that dominate $\omega$, then any countable collection $\{ \hat{C}_{\varepsilon_n}(X_n,\omega) \mid n \in \N \} $ fails to separate $\omega$ from a vertex on one of these rays or from a dominating vertex.
\end{proof}

Second countability for $\vert G \vert$ is simply decided by the order of $G$:

\begin{lemma}\label{lemma: modG with Mtop second countable}
For every connected graph $G$ and the space  $\vert G \vert$ with \textsc{MTop} the following assertions are equivalent:
\begin{enumerate}
    \item $\vert G \vert$ is second countable.
    \item $V(G)$ is countable.
\end{enumerate}
\end{lemma}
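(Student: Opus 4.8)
The plan is to prove the equivalence $V(G)$ countable $\iff$ $\vert G\vert$ second countable by establishing both implications directly, using the metric length-space structure underlying \textsc{MTop}. The forward implication (ii)$\Rightarrow$(i) is the constructive half: assuming $V(G)$ countable, I would exhibit an explicit countable base. Since $G$ is connected with countable vertex set, $G$ has countably many edges, and hence the metric length-space $G$ is separable. First I would fix a countable dense set $D$ of points of $G$ (for instance all vertices together with the rational-distance points on each edge) and take all the metric balls of rational radius centred at points of $D$; these form a countable base for the metric part of $\vert G\vert$. It remains to produce a countable neighbourhood base at every end. Because $V(G)$ is countable, there are only countably many finite vertex sets $X\subset V(G)$, and for each such $X$ only countably many components of $G-X$; thus there are only countably many basic neighbourhoods $\hat C_\varepsilon(X,\omega)$ once we also restrict $\varepsilon$ to rationals. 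Collecting these with the metric balls yields a countable family, and I would check it is a base by verifying that every basic open set contains a member of the family around each of its points — routine from the definition of \textsc{MTop}.

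The reverse implication (i)$\Rightarrow$(ii) is where the real content lies, and I expect it to be the main obstacle. The natural approach is contrapositive: assuming $V(G)$ is uncountable, I would show $\vert G\vert$ is \emph{not} second countable by producing uncountably many pairwise disjoint nonempty open sets — a second countable space cannot contain an uncountable family of pairwise disjoint open sets, since each would need a distinct basic open subset. The cleanest way to generate such a family is to note that every vertex $v$ of $G$ is an \emph{inner} point of each of its incident edges in the length-space sense: around $v$ one can place a small open metric ball, and these balls can be chosen pairwise disjoint only if we are careful about adjacent vertices. Since adjacency could make vertex-balls overlap, I would instead associate to each vertex $v$ a distinguished open arc in the interior of one incident edge (using connectedness to ensure each vertex has an incident edge), choosing the arcs to lie in disjoint halves of distinct edges. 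As $V(G)$ is uncountable, $G$ has uncountably many edges, so this yields uncountably many pairwise disjoint open subsets of $\vert G\vert$, contradicting second countability.

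A cleaner alternative for (i)$\Rightarrow$(ii), which I would likely prefer to avoid the bookkeeping about adjacency, is to observe that second countability is hereditary: if $\vert G\vert$ is second countable then so is its subspace $G$ viewed as a metric length-space. But a metric space is second countable if and only if it is separable, and a length-space on a graph with uncountably many vertices is not separable — any dense set must approach each of the uncountably many vertices, and since vertices are at pairwise distance at least $1$ from one another (distinct vertices are endpoints of arcs of length $\geq 1$), an uncountable $1$-separated set cannot be met densely by a countable set. This gives the contradiction immediately. The main obstacle throughout is simply being precise about the length-space metric and the structure of \textsc{MTop}-basic neighbourhoods; once the metric-part and the end-part of the base are handled separately and combined, both directions follow. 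I would present the reverse direction via the $1$-separated vertex set, since it is the shortest rigorous route.

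\begin{proof}
(ii)$\Rightarrow$(i) Suppose $V(G)$ is countable. As $G$ is connected, it has only countably many edges, so the metric length-space $G$ is separable; fix a countable dense set $D\subset G$ and let $\cB_1$ be the countable collection of all open metric balls of rational radius around points of $D$. This is a countable base for the subspace $G$ of $\vert G\vert$. Since $V(G)$ is countable, there are only countably many finite vertex sets $X$, and for each such $X$ only countably many components of $G-X$; hence the family $\cB_2$ of all sets $\hat C_{1/n}(X,\omega)$ with $n\in\N$, $X\in\cX$, and $\omega\in\Omega(X,\omega)$ is countable. Given any end $\omega$ and any basic neighbourhood $\hat C_\varepsilon(X,\omega)$, choosing $n$ with $1/n<\varepsilon$ shows $\hat C_{1/n}(X,\omega)\subset \hat C_\varepsilon(X,\omega)$, so $\cB_2$ provides a countable neighbourhood base at every end. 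Therefore $\cB_1\cup\cB_2$ is a countable base and $\vert G\vert$ is second countable.

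(i)$\Rightarrow$(ii) We argue by contraposition. Suppose $V(G)$ is uncountable. In the length-space metric any two distinct vertices lie at distance at least $1$, so $V(G)$ is an uncountable $1$-separated subset of the metric subspace $G\subset\vert G\vert$. Consequently no countable subset of $G$ can be dense: a dense set would have to meet the disjoint open balls of radius $\tfrac12$ around the uncountably many vertices. Thus $G$ is not separable, hence not second countable as a metric space. Since second countability is hereditary, $\vert G\vert$ is not second countable either.
\end{proof}
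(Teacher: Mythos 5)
Your proof is correct and takes essentially the same approach as the paper: the reverse direction rests on the same observation that the uncountably many vertices carry pairwise disjoint open half-balls (you package this as non-separability of the metric subspace plus heredity of second countability, where the paper directly exhibits the disjoint open stars in $\vert G\vert$), and the forward direction assembles the same kind of countable base from a countable base for the metric part together with end-neighbourhoods $\hat C_{1/n}(X,\omega)$ over countably many finite vertex sets. Only cosmetic details differ, such as your use of all finite $X$ rather than the paper's initial segments $X_n$ of an enumeration.
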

\begin{proof}
(i)$\Rightarrow$(ii) If $G$ has uncountably many vertices, then the uniform stars with radius~$\frac{1}{2}$ around every vertex form an uncountable collection of disjoint open sets. Hence, $\vert G \vert$ is not second countable.

(ii)$\Rightarrow$(i) If $V(G)$ is countable, we take an enumeration $V(G)=\{v_1, v_2, \ldots \}$ of the vertex set of $G$ and write $X_n$ for the set of the first $n$ vertices. For every vertex $v$ the uniform stars with radius $\frac{1}{n}$ around $v$ form a countable neighbourhood  base. There are only countably many edges and each is homeomorphic to the unit interval and therefore has a countable base of its topology. Finally, all the sets of the form  $\hat{C}_{\frac{1}{n}}(X_n, \omega)$ for all $n \in \N$ and all $\omega \in \Omega(G)$ give a countable neighbourhood base for all the ends of~$G$. Note that $G-X_n$ has only countably many components, for all $n\in \N$; therefore there are indeed only countably many sets of the form $\hat{C}_{\frac{1}{n}}(X_n, \omega)$.
\end{proof}

%
%



\bibliographystyle{plain}
\bibliography{reference}

\begin{thebibliography}{10}

\bibitem{StarComb1StarsAndCombs}
C.~Bürger and J.~Kurkofka.
\newblock {Duality theorems for stars and combs I: Arbitrary stars and combs},
  2020.
\newblock \href{https://arxiv.org/abs/2004.00594}{arXiv:2004.00594}.

\bibitem{CanonicalTreeOfTangles}
J.~Carmesin, M.~Hamann, and B.~Miraftab.
\newblock Canonical trees of tree-decompositions, 2020.
\newblock \href{https://arxiv.org/abs/2002.12030}{arXiv:2002.12030}.

\bibitem{diestel2006end}
R.~Diestel.
\newblock End spaces and spanning trees.
\newblock {\em J. Combin.\ Theory (Series B)}, 96(6):846--854, 2006.

\bibitem{diestel2015book}
R.~Diestel.
\newblock {\em {Graph Theory}}.
\newblock Springer, 5th edition, 2015.

\bibitem{EndsAndTangles}
R.~Diestel.
\newblock {Ends and Tangles}.
\newblock {\em {Abh.\ Math.\ Sem.\ Univ.\ Hamburg}}, 87(2):223--244, 2017.
\newblock {Special issue in memory of Rudolf Halin},
  \href{https://arxiv.org/abs/1510.04050}{arXiv:1510.04050v3}.

\bibitem{VTopComp}
R.~Diestel.
\newblock End spaces and spanning trees.
\newblock {\em J. Combin.\ Theory (Series B)}, 96(6):846--854,
  \href{http://www.math.uni-hamburg.de/home/diestel/papers/EndSpacesAndNSTs.pdf}{2006}.

\bibitem{diestelKuhn2003directions}
R.~Diestel and D.~Kühn.
\newblock {Graph-theoretical versus topological ends of graphs}.
\newblock {\em J. Combin.\ Theory (Series B)}, 87(1):197--206, 2003.

\bibitem{EngelkingBook}
R.~Engelking.
\newblock {\em {General Topology}}, volume~6 of {\em Sigma Series in Pure
  Mathematics}.
\newblock Heldermann Verlag, Berlin, second edition, 1989.

\bibitem{Halin_Enden64}
R.~Halin.
\newblock {Über unendliche Wege in Graphen}.
\newblock {\em Math.\ Annalen}, 157:125--137, 1964.

\bibitem{jung1969wurzelbaume}
H.A. Jung.
\newblock {Wurzelb{\"a}ume und unendliche Wege in Graphen}.
\newblock {\em Math.\ Nachr.}, 41:1--22, 1969.

\bibitem{ApproximatingNormalTrees}
J.~Kurkofka, R.~Melcher, and M.~Pitz.
\newblock Approximating infinite graphs by normal trees.
\newblock {\em J.~Combin.\ Theory (Series B)}, 148:173--183, 2021.
\newblock \href{https://arxiv.org/abs/2002.08340}{arXiv:2002.08340}.

\bibitem{EndsTanglesCrit}
J.~Kurkofka and M.~Pitz.
\newblock Ends, tangles and critical vertex sets.
\newblock {\em Math.\ Nachr.}, 292(9):2072--2091, 2019.
\newblock \href{https://arxiv.org/abs/1804.00588}{arXiv:1804.00588}.

\bibitem{StoneCechTangles}
J.~Kurkofka and M.~Pitz.
\newblock {Tangles and the Stone-\v{C}ech compactification of infinite graphs}.
\newblock {\em J.~Combin.\ Theory (Series B)}, 146:34--60, 2021.
\newblock \href{https://arxiv.org/abs/1806.00220}{arXiv:1806.00220}.

\bibitem{polat1996ends}
N.~Polat.
\newblock Ends and multi-endings, {I}.
\newblock {\em J. Combin.\ Theory (Series B)}, 67:86--110, 1996.

\bibitem{polat1996ends2}
N.~Polat.
\newblock Ends and multi-endings, {II}.
\newblock {\em J. Combin.\ Theory (Series B)}, 68:56--86, 1996.

\bibitem{sprussel2008end}
P.~Spr{\"u}ssel.
\newblock End spaces of graphs are normal.
\newblock {\em J. Combin.\ Theory (Series B)}, 98(4):798--804, 2008.

\end{thebibliography}

\end{document}